\newcommand{\R}{\ensuremath{\mathbb{R}}}
\newcommand{\N}{\ensuremath{\mathbb{N}}}
\renewcommand{\div}{\textrm{div }}
\newcommand{\curl}{\textrm{curl }}
\newcommand{\eps}{\varepsilon}
\theoremstyle{definition} 
\newtheorem{theorem}{Theorem}[section]
\newtheorem{definition}[theorem]{Definition}
\newtheorem{lemma}[theorem]{Lemma}
\newtheorem{remark}[theorem]{Remark}
\begin{document}

\title[Piecewise constant subsolutions for Muskat]{Piecewise constant subsolutions for the Muskat problem}

\author{Clemens F\"{o}rster}
\address{Institut f\"ur Mathematik, Universit\"at Leipzig, D-04103 Leipzig, Germany}
\email{clemens.foerster@math.uni-leipzig.de}

\author{L\'aszl\'o Sz\'ekelyhidi Jr.}
\address{Institut f\"ur Mathematik, Universit\"at Leipzig, D-04103 Leipzig, Germany}
\email{laszlo.szekelyhidi@math.uni-leipzig.de}

\date{\today}

\begin{abstract}
We show the existence of infinitely many admissible weak solutions for the incompressible porous media equations for all Muskat-type initial data with $C^{3,\alpha}$-regularity of the interface in the unstable regime and for all non-horizontal data with $C^{3,\alpha}$-regularity in the stable regime. Our approach involves constructing admissible subsolutions with piecewise constant densities. This allows us to give a rather short proof where it suffices to calculate the velocity and acceleration at time zero - thus emphasizing the instantaneous nature of non-uniqueness due to discontinuities in the initial data. 
\end{abstract}

\maketitle

\section{Introduction}

We consider the evolution of two incompressible fluids with the same viscosity and different densities, moving in a porous two-dimensional medium with constant permeability under the action of gravity according to Darcy's law. After non-dimensionalizing, the equations describing the evolution of density $\rho$ and velocity $u$ are given by (see \cite{otto:ipm,cg:condynipm} and references therein)
\begin{align}
\partial_t \rho + \div(\rho u) &= 0\,,\label{IPM1} \\
\div u &= 0\,,\label{IPM2}\\
u + \nabla p &= -(0,\rho)\,,\label{IPM3} \\
\rho(x,0) &= \rho_0(x)\,.\label{IPM4}
\end{align}
We assume that at the initial time the two fluids, with densities $\rho^+$ and $\rho^-$, are separated by an interface which can be written as the graph of a function over the horizontal axis. That is, 
\begin{align}\label{init}
\rho_0(x) = \begin{cases} \rho^+ & x_2 > z_0(x_1),\\ \rho^- & x_2 < z_0(x_1). \end{cases}
\end{align}
Thus, the interface separating the two fluids at the initial time is given by $\Gamma := \{(s,z_0(s))| s \in \R\}$. We distinguish the following cases: If $\rho^+ > \rho^-$, which means that the heavier fluid is on top, we speak of the unstable regime. The case $\rho^+< \rho^-$ is called the stable regime. 

\subsection*{The Muskat problem}

Since for given $\rho(x,t)$ at a fixed time $t$, $u$ is the solution of an elliptic problem by the Biot-Savart law, the equations \eqref{IPM1}-\eqref{IPM3} describe the evolution of the density in time. Assuming that $\rho(x,t)$ remains in the form \eqref{init} for positive times, the system reduces to a non-local evolution problem for the interface $\Gamma$, known as the Muskat problem.
If the sheet can be presented as a graph as above, one can show (see for example \cite{cg:condynipm}) that the equation for $z(s,t)$ is given by
\begin{align}\label{evolz}
\partial_t z(s,t) = \frac{\rho^- - \rho^+}{2\pi} \int_{- \infty}^{\infty}  \frac{(\partial_{s} z(s,t) - \partial_{s} z(\xi,t))(s-\xi)}{(s-\xi)^2 + (z(s,t) - z(\xi,t))^2} d\xi.
\end{align}
The behavior of solutions of \eqref{evolz} depends strongly on the sign of $\rho^- - \rho^+$. In the stable case, this equation is locally well-posed in $H^3(\R)$, see \cite{cg:condynipm} or \cite{cgvs:gr2dm} for an improved regularity. However, in the unstable regime, we have an ill-posed problem, see \cite{Saffman:1958cu,cg:condynipm}. In particular, in the unstable case, there are no general existence results for \eqref{evolz} known.

Thus, the description of \eqref{IPM1}-\eqref{IPM4} as a free boundary problem seems not suitable for the unstable regime.
In \cite{otto:ipm,otto:ema} F.~Otto used a Lagrangian relaxational approach in the spirit of optimal transportation and he proved the existence of a unique relaxation limit $\bar{\rho}$ in the case of the flat initial datum $z_0 \equiv 0$. Whilst Otto's relaxation limit does not satisfy the original system \eqref{IPM1}-\eqref{IPM4} any more, the relaxed density can be thought of as a macroscopic average of an infinitely fine mixture of the two phases $\rho^{\pm}$. More precisely, there is a growing \emph{mixing zone} around the initial unstable sheet $\Gamma$, where the two densities are mixed with average density $\bar{\rho}(x,t)$ which satisfies an evolution equation (a variant of the 1D Burgers equation). Such a behaviour is reminiscent of the physically expected behaviour in the unstable regime \cite{Wooding:1976vca,otto:ipm}.

\subsection*{Weak solutions}

Taking the curl of \eqref{IPM3}, we can eliminate the pressure and obtain $\curl u = -\partial_{x_1}\rho$. This motivates the definition of weak solutions in the following form.
\begin{definition}\label{d:weak}
Let $\rho_0 \in L^{\infty}(\R^2)$ and $T>0$. We call $(\rho,u) \in L^{\infty}(\R^2 \times [0,T))$ a weak solution of \eqref{IPM1}-\eqref{IPM4} with initial data $\rho_0$ if
\begin{align*}
\int_0^T \int_{\R^2} \rho (\partial_t \phi + u \cdot \nabla \phi) dx dt &= \int_{\R^2} \phi(x,0) \rho_0(x)dx \quad \forall \phi \in C^{\infty}_c([0,T) \times \R^2)\\
\int_{\R^2} u \cdot \nabla \phi dx &= 0 \quad \forall \phi \in C^{\infty}_c(\R^2)\\
\int_{\R^2} (u+(0,\rho)) \cdot \nabla^{\perp} \phi dx &= 0 \quad \forall \phi \in C^{\infty}_c(\R^2).
\end{align*}
\end{definition}

In \cite{cfg:luipm} infinitely many weak solutions of \eqref{IPM1}-\eqref{IPM4} were constructed to any initial datum $\rho_0$. The construction is a variant of convex integration, as used in \cite{dlseuler,dls:hprinc}, and in particular the result in \cite{cfg:luipm} is in a sense the IPM-analogue of the Scheffer-Shnirelman construction for the Euler equations \cite{Scheffer93,Shnirelman,w:existweak}. However, these weak solutions do not retain the geometric structure of initial data of the type \eqref{init}, and in particular the density $\rho$ may exceed the initial densities $\rho^{\pm}$. 

\subsection*{Admissibility and mixing solutions}

Motivated by the analogous development of admissible weak solutions for the Euler equations \cite{dls:admcrit} as well as the result of Otto in \cite{otto:ipm}, in \cite{sze:ipm} \emph{admissible weak solutions} were introduced by the second author as weak solutions $(\rho,u)$ such that 
\begin{equation}\label{e:admissible}
\rho(x,t) \in [\rho^-,\rho^+]\quad\textrm{ for a.e. }(x,t),
\end{equation}
(or $\rho \in [\rho^+,\rho^-]$ in the stable case, where $\rho^+<\rho^-$). 

In \cite{sze:ipm} the second author showed that there exist infinitely many \emph{admissible weak solutions} for the Muskat problem in the unstable regime with flat initial data by the convex integration method. Moreover, an interesting connection between such admissible weak solutions and the relaxation limit of Otto is provided by the concept of subsolution (see below in Definition \ref{d:subsol}). In a nutshell, weak solutions constructed by convex integration arise by adding high-frequency spatially localized perturbations to an initial $\bar{\rho}(x,t)$, which can be thought of as an averaged density: by increasing the frequency of the perturbations, one can easily construct a sequence of admissible weak solutions $(\rho_k,u_k)$, such that $\rho_k \overset{\ast}{\rightharpoonup} \overline{\rho}$ in $L^{\infty}$, see \cite{sze:ipm}. Whereas the construction of weak solutions from strict subsolutions is by now very well understood in the general setting, constructing strict subsolutions to the initial value problem still requires ad-hoc methods \cite{sze:vortex,sze:ipm,bszw:nonuniq,cdk:gip} and no general technique seems to exist.

Recently, the result from \cite{sze:ipm} was generalized to arbitrary initial curves $z_0$ by Castro, Cordoba and Faraco in \cite{ccf:ipm}.
The main theorem in \cite{ccf:ipm} states that for each $z_0 \in H^5(\R)$ and for $\rho^+ > \rho^-$, there exist infinitely many admissible weak solutions to \eqref{IPM1}-\eqref{IPM4} with initial data \eqref{init}. The key point in the proof is to show the existence of certain strict subsolutions, which are in a sense the geometrically nonlinear analogues of the subsolution constructed in \cite{sze:ipm}: the two ingredients defining the subsolution are a density and an evolving sheet (as in the original Muskat problem), whose translates are then level-sets of the density (see also Section \ref{s:sub} below). The density is chosen exactly as in \cite{sze:ipm}, but the evolving sheet needs to solve a nonlinear and nonlocal evolution equation $\partial_tz=\mathcal{F}(u)$ (see (1.11)-(1.12) in \cite{ccf:ipm}) -- the analysis of this equation is the central part of the proof in \cite{ccf:ipm}.

\subsection*{The main result}

The aim of this paper is to give an alternative and considerably simpler proof of the main result from \cite{ccf:ipm} for the unstable case. The key difference is that we allow the density to be piecewise constant -- in turn, rather than having to prove local well-posedness for a non-linear and non-local evolution equation, it suffices to obtain expressions for the velocity and acceleration of a double-sheet at time $t=0$. Our construction is similar in spirit to fan-subsolutions, introduced for flat shock-waves  for the compressible Euler equations in \cite{cdk:gip}. The advantage is not only the considerably shorter proof, but also the lower regularity requirement on the initial curve: we require $C^{3,\alpha}$ with decay at infinity, in contrast to $H^5$ in \cite{cdk:gip}) . Finally, we extend our result also to the stable case, provided the initial interface is not horizontal flat (thus, extending the observation made in \cite{ccf:ipm} Section 7 concerning flat non-horizontal interfaces in the stable case). 

Our assumption on the initial datum is that the initial interface is asymptotically flat with some given slope $\beta\in\R$, i.e. $\rho_0$ is given by \eqref{init} with
\begin{equation}\label{e:z0}
	z_0(s) = \beta s + \overline{z}_0(s)
\end{equation}
for some $\overline{z}_0$ with sufficiently fast decay at infinity. More precisely, for any $\alpha>0$ define the seminorm
$$
[f]^*_{\alpha}:=\sup_{|\xi|\leq 1,s\in\R}(1+|s|^{1+\alpha})\frac{|f(s-\xi)-f(s)|}{|\xi|^{\alpha}},
$$
and for any $k\in\N$ the norm
$$
\|f\|_{k,\alpha}^*:=\sup_{s\in\R,j\leq k}(1+|s|^{1+\alpha})|\partial_s^jf(s)|+[\partial_s^kf]_{\alpha}^*.
$$
We denote by $C_*^{k,\alpha}(\R):=\{f\in C^{k,\alpha}(\R):\,\|f\|_{k,\alpha}^*<\infty\}$.

\begin{theorem}\label{Mth}

Let $z_0(s) = \beta s + \overline{z}_0(s)$ with $\overline{z}_0 \in C^{3,\alpha}_*(\R)$ for some $0<\alpha<1$ and $\beta \in \R$.
\begin{enumerate}
\item[(i)] In the unstable case $\rho^+ > \rho^-$, for each $\beta \in \R$, there exists $T_*>0$ such that there exist infinitely many admissible weak solutions to \eqref{IPM1}-\eqref{IPM4} in $[0,T_*)$.
\item[(ii)] In the stable case $\rho^+ < \rho^-$, whenever $\beta\neq 0$ and $\|\partial_s\bar{z}_0\|_{L^\infty}<|\beta|$ there exists $T_*>0$ such that there exist infinitely many admissible weak solutions to \eqref{IPM1}-\eqref{IPM4} in $[0,T_*)$.
\end{enumerate}
\end{theorem}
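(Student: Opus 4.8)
The plan is to construct, for each admissible initial datum, a suitable \emph{piecewise constant subsolution} in the sense of the mixing-zone philosophy of \cite{sze:ipm,ccf:ipm}, and then to invoke the (by now standard) convex integration machinery to produce infinitely many genuine admissible weak solutions agreeing with it outside the mixing zone. Concretely, I would look for a triple consisting of a thin ``double sheet'' — two interfaces $z_\pm(s,t)$ bounding a growing mixing region $\Omega_t$ — together with a density $\overline\rho$ that equals $\rho^+$ above $z_+$, equals $\rho^-$ below $z_-$, and takes an intermediate piecewise constant (or affine-in-the-transverse-variable) profile in between, with the level sets of $\overline\rho$ being vertical translates of a single curve, exactly as in \cite{ccf:ipm}. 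The associated momentum field and relaxed nonlinearity are then chosen so that the linear equations \eqref{IPM1}--\eqref{IPM3} (in the relaxed/subsolution form) hold and the pointwise constitutive inclusion lies in the \emph{interior} of the appropriate lamination hull (strictness), so that Definition~\ref{d:subsol} is met on $(0,T_*)$, with the mixing zone shrinking to $\Gamma$ as $t\to 0$.

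The heart of the argument — and the place where we depart from, and simplify, \cite{ccf:ipm} — is that because $\overline\rho$ is piecewise constant we do not need to solve the nonlinear nonlocal evolution equation $\partial_t z=\mathcal F(u)$ globally in time; it suffices to know the velocity and acceleration of the double sheet \emph{at $t=0$}. So the key steps are: (1) write down, via the Biot--Savart/Darcy law, the velocity field $u=\nabla^\perp(-\Delta)^{-1}(\curl u)$ generated by the initial configuration, i.e. compute the Biot--Savart integral of $-\partial_{x_1}\overline\rho|_{t=0}$, which reduces to a principal-value singular integral along $\Gamma$ of exactly the Muskat type in \eqref{evolz}; (2) differentiate once more in time to get the acceleration, which requires controlling the time-derivative of this singular integral — here the $C^{3,\alpha}_*$ hypothesis and the decay built into the $\|\cdot\|^*_{k,\alpha}$ norm are used to make the integrals absolutely convergent and the derivatives well-defined; and (3) check that for the chosen opening speed of the mixing zone (linear in $t$, with the opening rate dictated by the jump $\rho^+-\rho^-$ and the local geometry of $\Gamma$) the subsolution is strict for short time. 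The sign of $\rho^+-\rho^-$ enters precisely through whether this opening is possible: in the unstable case the Rayleigh--Taylor instability makes the mixing zone open for every slope $\beta$, whereas in the stable case one needs the ``shear'' produced by a non-horizontal interface, which is why the hypotheses $\beta\neq 0$ and $\|\partial_s\bar z_0\|_{L^\infty}<|\beta|$ appear (they guarantee the relevant normal velocity jump has the right sign along all of $\Gamma$).

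Having produced a strict subsolution on $[0,T_*)$ that attains the initial datum \eqref{init} in the appropriate sense, the final step is purely to quote the convex integration result: adding high-frequency, spatially localized perturbations supported in the open mixing zone (as in \cite{dls:hprinc,sze:ipm,ccf:ipm}) yields infinitely many $(\rho,u)\in L^\infty$ solving the weak formulation of Definition~\ref{d:weak}, with $\rho\in[\rho^-,\rho^+]$ (resp.\ $[\rho^+,\rho^-]$), hence admissible; the flexibility of the perturbation (e.g.\ one free parameter in the amplitude or a free choice among a continuum of localizations) gives the ``infinitely many.'' I expect the main obstacle to be step (2): obtaining a clean, rigorous expression for the acceleration of the double sheet at $t=0$ and the attendant estimates showing the constitutive inclusion stays strictly inside the hull uniformly down to $t=0$. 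This is where the precise choice of the weighted Hölder space $C^{3,\alpha}_*$ pays off, turning the nonlocal singular integrals into manageable, absolutely convergent quantities; conversely, it is also where one must be careful that the strictness is not lost as $t\to0^+$, so the quantitative form of the short-time expansion (velocity $O(1)$, correction $O(t)$, slack in the hull bounded below) has to be tracked explicitly.
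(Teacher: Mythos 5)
Your outline for the unstable case (i) matches the paper's strategy: a piecewise constant density with two interfaces $z(s,t)\pm ct$, a flux correction $m=\rho u-(1-\rho^2)(\gamma+\tfrac12 e_2)$ with $\gamma=\nabla^\perp g$, and a curve $z=z_0+tz_1+\tfrac12 t^2 z_2$ whose coefficients are exactly the initial normal velocity and the initial symmetrized acceleration computed from the Biot--Savart principal-value integral; the $C^{3,\alpha}_*$ hypothesis is indeed what makes the weighted singular-integral (Schauder-type) estimates and the $t\to0$ limits work. One refinement you should make explicit: ``velocity and acceleration at $t=0$ suffice'' has to be quantified as two separate conditions --- $\|\partial_t z-u_\nu^\pm\|_{L^\infty}=o(1)$ \emph{and} $\|2\partial_t z-u_\nu^+-u_\nu^-\|_{L^1}=o(t)$ --- because the transverse component $\partial_{x_2}g$ of the potential is obtained by dividing the \emph{symmetrized} defect by the width $2ct$ of the mixing zone; this is why only the symmetrized acceleration needs to be matched to second order, and it is the precise content of the paper's Theorem~\ref{Subs}.

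For the stable case (ii) there is a genuine gap: the mechanism you propose is not the one that works, and the one that works is missing from your plan. The normal velocity is continuous across the interfaces, so there is no ``normal velocity jump'' whose sign the hypotheses could control; the jump conditions instead force the \emph{tangential} derivative of the potential along $\Gamma^\pm$ to equal $-(c+\tfrac12)\mp(\partial_t z-u_\nu^\pm)$, which is close to $-(c+\tfrac12)$ and hence of magnitude larger than $\tfrac12$ for every $c>0$. If you build $g$ by interpolating along vertical fibers as in the unstable case, you get $|\partial_{x_1}g|\approx c+\tfrac12>\tfrac12$ and strictness fails identically, for any initial curve. The paper's resolution is to parametrize $\Omega_{mix}$ by a \emph{sheared} diffeomorphism $\Phi_t(s,\lambda)=(s-\tfrac{\beta}{1+\beta^2}\lambda,\,z(s-\tfrac{\beta}{1+\beta^2}\lambda,t)+\lambda)$, so that a prescribed tangential derivative of size $c+\tfrac12$ along the tilted interfaces can coexist with $|\nabla g|<\tfrac12$; the resulting bound $|\nabla g|\lesssim\tfrac{1+|\beta|\|\partial_s\bar z\|_{L^\infty}}{1+\beta^2}(c+\tfrac12)+o(1)$ is where the hypotheses $\beta\neq0$ and $\|\partial_s\bar z_0\|_{L^\infty}<|\beta|$ enter --- they are exactly what makes the admissible range $0<c<c_{max}$ nonempty. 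Without this (or an equivalent) geometric device, step (3) of your plan cannot be completed in the stable regime.
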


As pointed out above, the advantage of our method is the simplicity of the proof and the lower regularity requirement for the initial curve. However, this comes at a small price: as will be explained below in Section \ref{s:sub}, the admissible weak solutions obtained in our Theorem (as well as those obtained in \cite{sze:ipm} and \cite{ccf:ipm}) have the common feature, that there is an expanding \emph{mixing zone} $\Omega_{mix}(t)$ concentrating on the initial interface $\Gamma_0$ at time $t=0$, where the two fluids are ``infinitely mixed''. The rate of expansion of the mixing zone in the unstable case is given by 
$$
\frac{\rho^+-\rho^-}{2}c
$$
for some $c>0$. The constructions in \cite{sze:ipm} and \cite{ccf:ipm} admit any $c<2$, and indeed, $c_{max}=2$ seems the maximal expansion rate possible (see \cite{sze:ipm} and the discussion following Theorem \ref{t:mixing} below). In contrast, our construction admits only $c<1$. However, this is only a problem for the simplest possible choice of piecewise constant density in \eqref{e:dens} - we will show in Section \ref{s:general} that with a more general piecewise constant density any expansion rate $c<2$ is reachable. 

We also point out that, just like in \cite{ccf:ipm}, our result is local in time: there is a short time of existence $[0,T_*]$; moreover $T_*\to 0$ as we reach the maximal speed $c\to c_{max}$. This is at variance with the result in \cite{sze:ipm}, which is global in time.

The paper is organized as follows. In Section \ref{s:sub} we recall the notion of a subsolution for \eqref{IPM1}-\eqref{IPM4} and show in Theorem \ref{Subs} that under appropriate estimates for the interface $z(s,t)$ as time $t\to 0$, we are able to construct a subsolution and thus prove our main result Theorem \ref{Mth}. In Section \ref{s:u} we recall the expression for the normal component of the velocity obtained by the Biot-Savart law for piecewise continuous densities, and provide Schauder-type estimates for the associated integral operators. Section \ref{s:z} is devoted to the construction of the interface curve $z(s,t)$ by evaluating the velocity and symmetrized acceleration at time $t=0$. Finally we generalize these results to more general piecewise constant densities in Section \ref{s:general}.

\subsection*{Acknowledgments} 
The authors gratefully acknowledge the support of the ERC Grant Agreement No. 724298.



\section{Subsolutions for the IPM Equations}\label{s:sub}

We start by recalling the general strategy for the construction of weak solutions, as followed in \cite{ccf:ipm} and \cite{sze:ipm}. The basic idea is to construct a suitable \emph{admissible subsolution}, and then apply the general machinery of convex integration.  

Observe that if $(\rho,u)$ is a solution of \eqref{IPM1}-\eqref{IPM4}, then so is $(\tilde\rho,\tilde u)$ given by
\begin{equation}\label{e:rescale}
\tilde\rho(x,t)=a\rho(x,at)+b,\quad \tilde u(x,t)=a u(x,at).
\end{equation} 
Then, by choosing 
$$
a=\frac{\rho^+-\rho^-}{2},\quad b=\frac{\rho^++\rho^-}{2}
$$
we may assume that the Muskat-type initial datum \eqref{init} is given by $\rho^{\pm} = \pm1$ (in the stable case the signs are obviously swapped). Under this normalization, admissibility amounts to the requirement
$$
|\rho|\leq 1\textrm{ for a.e. }(x,t).
$$

\begin{definition}\label{d:subsol}
Let $T > 0$. We call a triple $(\rho,u,m) \in L^{\infty}(\R^2 \times [0,T))$ an admissible subsolution of \eqref{IPM1}-\eqref{IPM4} if there exist open domains $\Omega^{\pm}, \Omega_{mix}$ with $\overline{\Omega^+} \cup \overline{\Omega^-} \cup \Omega_{mix} = \R^2\times [0,T)$ such that
\begin{enumerate}
\item[(i)] The system
\begin{equation}\label{IPMsub}
\begin{split}
\partial_t \rho + \div m &= 0\\
\div  u &= 0\\
\curl  u &= -\partial_{x_1} \rho\\
\rho\vert_{t=0} &= \rho_0
\end{split}
\end{equation}
holds in the sense of distributions in $\R^2 \times [0,T)$;
\item[(ii)] The pointwise inequality
\begin{align}\label{str}
\left|m-\rho u + \frac{1}{2} (0,1-\rho^2) \right| \leq  \frac{1}{2} \left(0,1-\rho^2 \right),
\end{align}
holds almost everywhere;
\item[(iii)] $|\rho(x,t)| = 1$ in $\Omega^{+}\cup\Omega^-$;
\item[(iv)] In $\Omega_{mix}$ the triple $(\rho,u,m)$ is continuous and \eqref{str} holds with a strict inequality.
\end{enumerate}
\end{definition}

Admissible subsolutions lead to the existence of infinitely many admissible weak solutions by the Baire category method for convex integration, see for instance the Appendix in \cite{sze:ipm}. As pointed out in \cite{ccf:ipm}, a slight modification of the general technique leads to the following statement:
\begin{theorem}\label{t:mixing}
Suppose there exists an admissible subsolution $(\bar{\rho},\bar{u},\bar{m})$ to \eqref{IPM1}-\eqref{IPM4} . Then there exist infinitely many 
admissible weak solutions $(\rho,u)$ with the following additional mixing property: For any $r>0$, $0<t_0<T$ and $x_0\in\R^2$ such that $B:=B_r(x_0,t_0)\subset \Omega_{mix}$, both sets $\{(x,t)\in B:\,\rho(x,t)=\pm 1\}$ have strictly positive Lebesgue measure. 

Furthermore, there exists a sequence of such admissible weak solutions $(\rho_k,u_k)$ such that $\rho_k\overset{*}\rightharpoonup \bar{\rho}$ as $k\to\infty$. 
\end{theorem}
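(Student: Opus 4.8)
The plan is to deduce Theorem~\ref{t:mixing} from the standard convex integration machinery (as developed in \cite{dls:admcrit,sze:ipm} and adapted in \cite{ccf:ipm}) by exhibiting the right functional-analytic setup and then invoking a Baire category argument. First I would fix the admissible subsolution $(\bar\rho,\bar u,\bar m)$ and, away from the degenerate set where $|\bar\rho|=1$, reformulate the problem as a differential inclusion: writing $z=(\rho,u,m)$, the constitutive relations \eqref{IPMsub} are linear, while the pointwise constraint \eqref{str} describes, for each value of $\rho\in(-1,1)$, a bounded convex set $K_\rho$ (a closed ball in the $(u,m)$-fibre), with interior exactly the set where the strict inequality in \eqref{str} holds. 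The key algebraic input — which one borrows verbatim from \cite{sze:ipm} — is that the $\Lambda$-convex hull of the extreme points of $K_\rho$ (those with $|\rho|=1$) equals $K_\rho$, so that plane-wave perturbations compatible with the linear constraints can be localized inside $\Omega_{mix}$ to push any strict subsolution towards an exact solution.

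The second step is to set up the complete metric space $X_0$ of subsolutions: take all triples $(\rho,u,m)$ that agree with $(\bar\rho,\bar u,\bar m)$ outside $\Omega_{mix}$, are continuous on $\overline{\Omega_{mix}}$, satisfy \eqref{IPMsub} distributionally, and satisfy \eqref{str} with strict inequality on $\Omega_{mix}$; then let $X$ be the closure of $X_0$ in the $L^\infty$-weak-$*$ topology (metrizable on bounded sets), which is nonempty since $\bar\rho$ itself lies in $X_0$ by hypothesis (iv). On $X$ the map $(\rho,u,m)\mapsto \rho$ is a Baire-one function of the weak-$*$ topology, hence its points of continuity form a residual set; and at any point of continuity one shows — using the localized plane-wave perturbations together with a suitable functional measuring the ``gap'' in \eqref{str}, in the style of \cite{dls:admcrit} — that the element must in fact satisfy \eqref{str} with equality a.e., i.e. be a genuine weak solution with $\rho\in\{\pm1\}$ a.e. in $\Omega_{mix}$ and $m=\rho u$. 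This yields residually many, hence infinitely many, admissible weak solutions.

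For the mixing property, the point is that the perturbations added in $\Omega_{mix}$ are genuinely oscillatory in the $\rho$-component (they oscillate between values near $+1$ and near $-1$), so one can engineer the iteration so that on every ball $B=B_r(x_0,t_0)\subset\Omega_{mix}$ the oscillation of $\rho$ is forced to be nondegenerate: concretely, at each stage one adds perturbations supported on a lattice of small balls covering $\Omega_{mix}$ and of fixed relative amplitude, guaranteeing a quantitative lower bound on $|\{\rho=\pm1\}\cap B|$ that survives in the limit. This is exactly the ``slight modification'' alluded to before the statement, and it is implemented as in \cite{ccf:ipm}; alternatively one notes that the subset of $X$ consisting of weak solutions failing the mixing property on some rational ball is meagre, so a generic solution has the property on all such balls, hence on all balls by monotonicity. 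Finally, the sequence $\rho_k\overset{*}\rightharpoonup\bar\rho$ is obtained by taking, for each $k$, an element of $X_0$ differing from $\bar\rho$ only by perturbations of frequency $\geq k$, whose weak-$*$ limit is $\bar\rho$ by Riemann--Lebesgue; since $X_0\subset X$ and each such element can be further corrected to an exact solution staying $L^\infty$-close in a weak sense, one extracts the desired sequence.

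The main obstacle — really the only nontrivial analytic point — is verifying the $\Lambda$-convexity / wave-cone computation underlying the localized plane-wave perturbations, namely that the wave cone $\Lambda$ for the linear system $\partial_t\rho+\div m=0$, $\div u=0$, $\curl u=-\partial_{x_1}\rho$ is large enough that the $\Lambda$-convex hull of $\{(\rho,u,m):|\rho|=1,\ m=\rho u\}$ fibered over $\rho$ recovers the full ball in \eqref{str}; but this is precisely the content of the corresponding lemma in \cite{sze:ipm}, which applies here without change since the constitutive structure is identical, so in the present paper it can simply be quoted. Everything else (metrizability of the weak-$*$ topology on bounded sets, the Baire-one argument, the covering construction for mixing) is routine once that lemma is in hand.
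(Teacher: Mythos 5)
Your proposal is correct and follows essentially the same route the paper takes: the paper gives no independent proof of Theorem \ref{t:mixing} but, exactly as you do, reduces it to the standard Baire category convex integration scheme, quoting the wave-cone/$\Lambda$-convex hull computation and the perturbation lemma from the Appendix of \cite{sze:ipm} and the mixing refinement from \cite{ccf:ipm}. Your identification of the hull computation as the only non-routine input, and of the residual-set argument on rational balls for the mixing property, matches the intended argument, so nothing further is needed.
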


Thus, the crux of the matter is the construction of an admissible subsolution. In \cite{sze:ipm} the $x_1$-invariance of the initial curve $z_0 \equiv 0$ simplifies the construction of a subsolution. Indeed, assuming that $(\rho,u,m)$ is a function of $(x_2,t)$ only, the equation $\partial_t \rho + \div m=0$ together with maximizing the constraint \eqref{str} leads to Burger's equation $\partial_t \rho + c \partial_{x_2} \frac{1}{2}\rho^2 = 0$ with $0\leq c<2$. This equation admits a continuous rarefaction wave solution for the density
\begin{equation}\label{e:flatrho}
\rho(x,t) = \begin{cases} 1 & x_2 > c t,\\ \frac{x_2}{c t} & |x_2| < c t , \\ -1 & x_2 < -c t \end{cases}
\end{equation}
in the unstable case, whereas in the stable case we merely obtain the stationary shock-wave
\begin{equation*}
\rho(x,t) = \begin{cases} -1 & x_2 > 0, \\ 1 & x_2 < 0. \end{cases}
\end{equation*}

Therefore $c$ can be thought of as a weak notion of \emph{mixing speed}, in the following sense: the general structure of weak solutions corresponding to this subsolution will be that there are three time-dependent regions: $\Omega^+(t)$, $\Omega^-(t)$ and $\Omega_{mix}(t)$, given by 
\begin{equation}\label{omega}
\begin{split}
\Omega^+(t) &= \{x \in \R^2 | x_2 > z(x_1,t) + ct\},\\
\Omega_{mix}(t) &= \{x \in \R^2 |z(x_1,t) - ct < x_2 < z(x_1,t) + ct\},\\
\Omega^-(t) &= \{x \in \R^2 | x_2 < z(x_1,t) - ct\}.\\
\end{split}
\end{equation}
for some curve $z(\cdot,t)$, with $\Omega_{mix}(t)$ expanding with speed $c$. The three open sets in Definition \ref{d:subsol} are then
$$
\Omega^{\pm}=\bigcup_{t>0}\Omega^{\pm}(t),\quad \Omega_{mix}=\bigcup_{t>0}\Omega_{mix}(t).
$$
In $\Omega^{\pm}$ the density is given by the constant value $\rho^{\pm}=\pm 1$, and in the \emph{mixing zone} $\Omega_{mix}$ the two fluids are completely mixed - see Section 4 in \cite{sze:ipm} and Sections 2-3 in \cite{ccf:ipm}. In the above $x_1$-invariant setting from \cite{sze:ipm} the curve $z$ is simply stationary, i.e. $z(s,t)=z_0(s)\equiv 0$. Furthermore it was shown in \cite{sze:ipm} that for $x_1$-invariant subsolutions $c=2$ is the maximal possible speed, and it was conjectured, based on similarities with the Lagrangian relaxation framework of Otto in \cite{otto:ipm,otto:ema} that the maximal mixing speed could be used as a selection criterion.

In \cite{ccf:ipm} the construction from \cite{sze:ipm} was generalized to non-flat initial curves $z_0$ whilst retaining the structure \eqref{e:flatrho} in the mixing zone $\Omega_{mix}$. More precisely, the density is chosen as a linear interpolation between $\rho^+ = 1$ and $\rho^- = -1$. In this case, however, $z=z(s,t)$ has to solve a rather complicated evolution equation in time, which arises as a spatial average of the original Muskat evolution kernel. We wish to emphasize that the evolution equation obtained in \cite{ccf:ipm} is not necessarily a \emph{canonical} choice, but rather arises from the specific ansatz used for $\rho$ - indeed, we show below that simpler choices for the profile $\rho$ reduce the existence of a subsolution to differential inequalities which can be solved by prescribing velocity and acceleration of the interfaces at time $t=0$. Indeed, given $z:\R \times [0,T] \rightarrow \R$ define $\Omega^{\pm}(t)$ and $\Omega_{mix}(t)$ as in \eqref{omega} and set
\begin{align}\label{e:dens}
\begin{split}
\rho(x,t) = \begin{cases} \rho^+ & x \in \Omega^+(t),\\ 0 & x \in \Omega_{mix}(t) \\ \rho^- & x \in \Omega^-(t), \end{cases}
\end{split}
\end{align}
where $\rho^+=1, \rho^-=-1$ in the unstable case, and $\rho^+=-1, \rho^-=1$ in the stable case.
This definition of $\rho$ already determines the velocity $u$ by kinematic part of \eqref{IPMsub}, namely the Biot-Savart rule (see Section \ref{s:u} below)
\begin{equation}\label{e:BS}
\begin{split}
\div u&=0,\\
\curl u&=-\partial_{x_1}\rho.
\end{split}
\end{equation}
Note that $\rho$ is piecewise constant, with jump discontinuities across two interfaces 
\begin{equation}\label{e:Gammapm}
\Gamma^{\pm}(t)=\left\{ (s,z(s,t)\pm ct):\,s\in\R\right\}.
\end{equation}
It is well known \cite{cg:condynipm} that, provided the interfaces are sufficiently regular, the solution $u$ to \eqref{e:BS} is globally bounded, smooth in $\R^2\setminus(\Gamma^+\cup\Gamma^-)$ with well-defined traces on $\Gamma^{\pm}$, and the normal component is continuous across the interfaces. In particular it follows that the normal velocity component
\begin{equation}\label{e:unu}
u_{\nu}(x,t):=u(x,t)\cdot \begin{pmatrix}-\partial_{x_1}z(x_1,t)\\ 1\end{pmatrix}
\end{equation}
is a globally defined bounded and continuous function. In particular we set $u_{\nu}^\pm=u_{\nu}\vert_{\Gamma^{\pm}}$, i.e.
\begin{equation*}
u_{\nu}^{\pm}(s,t):=u_{\nu}(s,z(s,t)\pm ct,t).	
\end{equation*}

Our main result in this section is as follows:
\begin{theorem}\label{Subs}
Suppose that $z(s,t)=\beta s+\bar{z}(s,t)$ with $\bar{z}\in C^1([0,T];C^{1,\alpha}_*(\R))$ satisfies
\begin{align}
\lim_{t\to 0}\left \|\partial_t z(\cdot,t) - u_{\nu}^\pm(\cdot,t)\right\|_{L^{\infty}} &=  0\,, \label{z1}\\
\lim_{t\to 0}\frac{1}{t}\left\|2\partial_t z(\cdot,t) - u_\nu^+(\cdot,t) - u_{\nu}^-(\cdot,t)\right\|_{L^1} &=0\,. \label{z2} 
\end{align}
In the stable case assume in addition that $\|\bar{z}(\cdot,0)\|_{L^{\infty}}<|\beta|$. 
Then, there exists $T_*\in (0,T]$ such that there exists an admissible subsolution for \eqref{IPM1}-\eqref{IPM4} on $[0,T_*)$ with initial datum $\rho_0$ given by \eqref{init} with $z_0=z\vert_{t=0}$.  Furthermore, the density of the subsolution can be chosen to satisfy \eqref{omega}-\eqref{e:dens} with any $0<c<c_{max}$, where 
$$
c_{max}=\begin{cases}1&\textrm{ in the unstable case;}\\
\tfrac{1}{2}\tfrac{|\beta|(|\beta|-\|\partial_s\bar{z}_0\|_{L^{\infty}})}{1+|\beta|\|\partial_s\bar{z}_0\|_{L^{\infty}}}&\textrm{  in the stable case.}\end{cases}
$$
\end{theorem}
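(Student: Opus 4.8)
The plan is to verify Definition~\ref{d:subsol} directly for the piecewise-constant ansatz. Given $z$ as in the hypothesis, set $\rho$ as in \eqref{e:dens} and let $u$ solve the Biot--Savart system \eqref{e:BS}; by the results of Section~\ref{s:u}, $u$ is bounded, smooth away from $\Gamma^+\cup\Gamma^-$, with continuous normal component $u_\nu$ and well-defined one-sided traces, so the $u_\nu^\pm$ are bounded and continuous and $u$ is continuous on the open set $\Omega_{mix}$. In $\Omega^\pm$ the density equals $\pm1$, so the right-hand side of \eqref{str} vanishes and $m=\rho u$ is forced; then $\div m=\rho\,\div u=0$ and (iii) are immediate. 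Everything therefore reduces to constructing $m$ on $\Omega_{mix}$ that is divergence-free and continuous in the interior, has the correct normal trace on $\Gamma^\pm$ so that $\partial_t\rho+\div m=0$ holds distributionally, and satisfies \eqref{str} strictly — which, since $\rho\equiv0$ on $\Omega_{mix}$, reads $|m+(0,\tfrac12)|<\tfrac12$.

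First I would compute the jump conditions. Writing $\partial_t\rho+\div m=0$ as the vanishing spacetime divergence of $(\rho,m)$, testing against $\phi\in C_c^\infty$, and integrating by parts over the three regions — using $\div u=0$ in $\Omega^\pm$, the matching of $u_\nu$ across the interfaces, and that $\Omega_{mix}(0)$ has measure zero so that the $t=0$ contributions reduce exactly to $\int\rho_0\phi(\cdot,0)$ (legitimate since $z$ is continuous up to $t=0$ and $m$ bounded) — one finds that the equation holds if and only if, for each $t$,
\begin{align*}
m\cdot(-\partial_{x_1}z,1)\big|_{\Gamma^\pm(t)}=\rho^\pm\bigl(u_\nu^\pm(\cdot,t)-\partial_t z(\cdot,t)\mp c\bigr).
\end{align*}
By \eqref{z1} these normal traces converge, uniformly in $s$, to $-\rho^\pm c$ as $t\to0$: to $-c$ on both interfaces in the unstable case, and to $+c$ on both in the stable case.

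Next I would realise $m$ through a stream function on the thin strip $\Omega_{mix}(t)$. Parametrising by $(s,\lambda)$ via $x=(s,z(s,t)+\lambda ct)$, $\lambda\in(-1,1)$, and writing $m=\nabla^\perp\psi$ with $\psi=\Psi(s,\lambda)$, one gets $m\cdot(-\partial_{x_1}z,1)=\partial_s\Psi$, so prescribing the normal trace amounts to prescribing $\partial_s\Psi(\cdot,\pm1)=g^\pm$ with $g^\pm$ the right-hand side above; hence $\Psi(\cdot,\pm1)=\psi^\pm$ is fixed up to additive constants. Using this freedom together with \eqref{z2} one arranges $\|\psi^+-\psi^-\|_{L^\infty}\le\|2\partial_t z-u_\nu^+-u_\nu^-\|_{L^1}=o(t)$. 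In the unstable case I would take the linear interpolation $\Psi(s,\lambda)=\tfrac{1+\lambda}2\psi^+(s)+\tfrac{1-\lambda}2\psi^-(s)$; a chain-rule computation gives $m_1=-(\psi^+-\psi^-)/(2ct)$, so $|m_1|=o(1)$ by \eqref{z2}, and $m_2=\tfrac{1+\lambda}2g^++\tfrac{1-\lambda}2g^-+m_1\partial_{x_1}z\to-c$ uniformly by \eqref{z1}--\eqref{z2}. Thus $m\to(0,-c)$ uniformly on $\Omega_{mix}$, and since $|(0,-c)+(0,\tfrac12)|=|\tfrac12-c|<\tfrac12$ precisely for $0<c<1$, there is $T_*>0$ with $|m+(0,\tfrac12)|<\tfrac12$ holding with a uniform gap on $\Omega_{mix}\cap\{t<T_*\}$; together with continuity of $u,m$ there this gives (ii) and (iv), so $(\rho,u,m)$ is an admissible subsolution on $[0,T_*)$ with $c_{max}=1$.

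The stable case is the main obstacle: there the limiting normal traces are $+c$, and the naive interpolation gives $m\to(0,+c)$, which violates $|m+(0,\tfrac12)|<\tfrac12$ for every $c>0$. One must exploit the non-flatness: the hypothesis $\|\partial_s\bar z_0\|_{L^\infty}<|\beta|$ yields $\inf_s|\partial_s z_0(s)|\ge|\beta|-\|\partial_s\bar z_0\|_{L^\infty}>0$, and one chooses a non-linear transverse profile for $\Psi$ that tilts $m$ so as to pull it inside the disk $\{|m+(0,\tfrac12)|<\tfrac12\}$; the quantitative requirement — that the line of prescribed normal trace meet the constraint disk with room to spare, uniformly in $s$ as $t\to0$ — is exactly what produces the threshold $c<c_{max}$ in the statement. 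I expect the delicate part to be the design of this transverse profile and checking that the strict inequality survives uniformly up to $\Gamma^\pm$ after including the $o(t)$- and $o(1)$-corrections governed by \eqref{z1}--\eqref{z2}. Finally, collecting the estimates fixes $T_*\in(0,T]$; since $z(\cdot,0)=z_0$ and $ct\to0$ one has $\rho(\cdot,t)\to\rho_0$ a.e., and the bounded $m$ does not concentrate on the measure-zero set $\Omega_{mix}(0)$, so the initial condition in \eqref{IPMsub} holds and the construction is complete.
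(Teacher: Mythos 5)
Your reduction to the flux $m$ and the jump conditions is correct, and your treatment of the unstable case is essentially the paper's own argument in different clothing: writing $m=\nabla^\perp\psi$ on $\Omega_{mix}$ and interpolating $\psi$ linearly in the vertical variable between the two antiderivatives of the prescribed normal traces is, up to the affine substitution $g=-\psi-\tfrac12x_1$, identical to the paper's ansatz $m=\rho u-(1-\rho^2)(\nabla^\perp g+\tfrac12 e_2)$ with $\hat g(s,\lambda,t)$ linear in $\lambda$. Your identities $m_1=-(\psi^+-\psi^-)/(2ct)=o(1)$ (from \eqref{z2}) and $m_2\to-c$ (from \eqref{z1}) are exactly the paper's $\partial_\lambda\hat g\to0$ and $\partial_s\hat g\to c-\tfrac12$, and both routes yield $c_{max}=1$.

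The stable case, however, is a genuine gap, and you flag it yourself. You correctly diagnose that the vertical linear interpolation forces $m\to(0,+c)$, violating \eqref{str} for every $c>0$, and that since only the normal trace $-m_1\partial_{x_1}z+m_2\approx c$ is pinned, a tangentially tilted $m$ could still land in the disk $\{|m+(0,\tfrac12)|<\tfrac12\}$ when $\partial_{x_1}z$ is bounded away from zero --- but you do not produce the construction. The pointwise observation on each interface is not sufficient: one needs a single continuous, divergence-free $m$ on the strip whose normal traces match the prescribed data on \emph{both} $\Gamma^+$ and $\Gamma^-$ simultaneously and which stays inside the disk uniformly in the interior as $t\to0$, and it is exactly this global interpolation step that produces the quantitative threshold. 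The paper achieves it not with a ``nonlinear transverse profile'' but with a \emph{shear}: it sets $\hat g(s,\lambda,t)=g(\Phi_t(s,\lambda),t)$ with $\Phi_t(s,\lambda)=\bigl(s-\tfrac{\beta}{1+\beta^2}\lambda,\;z(s-\tfrac{\beta}{1+\beta^2}\lambda,t)+\lambda\bigr)$, again interpolates \emph{linearly} in $\lambda$ (so the boundary antiderivatives are evaluated at the shifted base points $s\mp\tfrac{\beta}{1+\beta^2}ct$, and \eqref{z1}--\eqref{z2} still give $\partial_\lambda\hat g\to0$, $\partial_s\hat g\to-(c+\tfrac12)$), and then converts back via $\nabla g=D\Psi_t^T\nabla\hat g$; the Jacobian factor $\tfrac{1}{1+\beta^2}$, degraded by $\|\partial_s\bar z\|_{L^\infty}$, is what beats the amplitude $c+\tfrac12$ and yields the stated $c_{max}=\tfrac12\tfrac{|\beta|(|\beta|-\|\partial_s\bar z_0\|_{L^\infty})}{1+|\beta|\|\partial_s\bar z_0\|_{L^\infty}}$. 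Without this (or an equivalent) explicit construction, your argument establishes neither the stable part of the theorem nor the specific value of $c_{max}$ there.
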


\begin{remark}
We note that the time of existence $T_*>0$ depends on $c$ and in particular $T_*\to 0$ as $c\to c_{max}$.
\end{remark}

\begin{proof}
Given $z=z(s,t)$, $c>0$ and $\rho(x,t)$ (defined by \eqref{omega}-\eqref{e:dens}), the velocity $u$ is determined by \eqref{e:BS}. Therefore it remains to define $m$ so that \eqref{IPMsub}-\eqref{str} are satisfied in $(0,T_*)\times \R^2$, with \eqref{str} a strict inequality in $\Omega_{mix}$. Set
\begin{align*}
m = \rho u - (1-\rho^2) (\gamma + \tfrac{1}{2}e_2)
\end{align*}
for some $\gamma=\gamma(x,t)$, with $\gamma\equiv 0$ in $\Omega^{\pm}$. Then \eqref{str} amounts to the condition
$$
|\gamma| < \frac{1}{2}\quad\textrm{ in }\Omega_{mix},
$$
whereas \eqref{IPMsub} is equivalent to $\div\gamma=0$ in $\Omega_{mix}$ together with two jump conditions
\begin{equation}\label{e:jump}
[\rho]_{\Gamma^{\pm}}(\partial_tz\pm c)+[m]_{\Gamma^{\pm}}\cdot \begin{pmatrix}\partial_{x_1}z(x_1,t)\\ -1\end{pmatrix}=0,
\end{equation}
where $[\cdot]_{\Gamma^\pm}$ denotes the jump on $\Gamma^{\pm}$.
Noting that $u_{\nu}$ in \eqref{e:unu} is globally well-defined and continuous, the jump conditions become
\begin{equation*}
[\rho]_{\Gamma^\pm}(\partial_t z - u_\nu^{\pm}\pm c)\mp\tfrac12\mp \gamma^{\pm}_{\nu}=0,
\end{equation*}
where 
\begin{equation*}
\gamma_{\nu}=\gamma\cdot \begin{pmatrix}-\partial_{x_1}z\\ 1\end{pmatrix}, 
\end{equation*}
and $\gamma_{\nu}^{\pm}$ denotes the one-sided limit $\lim_{\Omega_{mix}\ni x'\to x}\gamma_{\nu}(x)$ for $x\in \Gamma^{\pm}$.
Choosing $\gamma = \nabla^{\perp} g=\begin{pmatrix} -\partial_{x_2}g\\ \partial_{x_1}g\end{pmatrix}$ for some function $g\in C^1(\overline{\Omega_{mix} })$ and noting that
\begin{equation*}
[\rho]_{\Gamma^\pm}=\begin{cases}1&\textrm{ unstable case,}\\ -1&\textrm{ stable case,}\end{cases}
\end{equation*}
the conditions \eqref{IPMsub}-\eqref{str} reduce to
\begin{align}
|\nabla g| &< \frac{1}{2} \quad \textrm{in } \Omega_{mix} \label{g1}\\ 
\partial_\tau g&=(c - \tfrac{1}{2})\pm(\partial_t z - u_\nu^{\pm}) \quad \text{on }\Gamma^{\pm}\quad\textrm{(unstable case)}\label{g2}\\
\partial_\tau g&=-(c+\tfrac12) \mp (\partial_t z - u_\nu^{\pm} )\quad \text{on }\Gamma^{\pm}\quad\textrm{(stable case)}\label{g3}
\end{align}
where 
$$
\partial_\tau g(x,t)=\gamma_{\nu}(x,t)=\partial_{x_1}g(x,t)+\partial_{x_2}g(x,t)\partial_{x_1}z(x_1,t)
$$
is the tangential derivative of $g$ along curves defined by $z$.
We treat the unstable and stable cases separately.

\bigskip

\noindent{\bf Unstable case. } 
For $s\in\R$, $t\in (0,T)$ and $\lambda\in [-ct,ct]$ define
$$
\hat{g}(s,\lambda,t):=g(s,z(s,t)+\lambda,t)
$$
and observe that 
$$
(\partial_\tau g)(s,z(s,t)+\lambda,t)=\frac{\partial}{\partial s}\left(\hat{g}(s,\lambda,t)\right).
$$
In order to satisfy \eqref{g2} we then set
$$
\hat{g}(s,\pm ct,t):=\int_{0}^s c-\tfrac{1}{2}\pm(\partial_t z -u_\nu^\pm)\,ds',
$$
and, more generally, for $\lambda\in [-ct,ct]$ 
\begin{align*}
\hat{g}(s,\lambda,t)&:=\frac{ct+\lambda}{2ct}\hat{g}(s,ct,t)+\frac{ct-\lambda}{2ct}\hat{g}(s,-ct,t)\\
&= s ( c-\tfrac{1}{2} ) + \frac{\lambda+ct}{2ct} \left( \int_0^{s} \partial_t z - u_\nu^+ ds' \right) + \frac{\lambda-ct}{2ct} \left(\int_0^{s} \partial_t z - u_\nu^- ds' \right).
\end{align*}
Then
\begin{align*}
\partial_{\lambda}\hat{g}(s,\lambda,t)&=\frac{1}{2ct}\int_0^s\left(2\partial_tz-u_\nu^+-u_{\nu}^-\right)\,ds'\,,\\
\partial_s\hat{g}(s,\lambda,t)&=(c-\tfrac{1}{2})+\frac{\lambda+ct}{2ct} \left(\partial_t z - u_\nu^+ \right) + \frac{\lambda-ct}{2ct} \left( \partial_t z - u_\nu^-  \right).
\end{align*}
Noting that $\partial_{\lambda}\hat{g}(s,\lambda,t)=\partial_{x_2}g(s,z(s,t)+\lambda,t)$ and $\partial_s\hat{g}(s,\lambda,t)=\partial_{x_1}g(s,z(s,t)+\lambda,t)+\partial_{x_2}g(s,z(s,t)+\lambda,t)\partial_{x_1}z(s,t)$, from the assumptions \eqref{z1}-\eqref{z2} we deduce
\begin{align*}
\|\partial_{x_2}g(\cdot,t)\|_{L^{\infty}}\to 0\quad\textrm{ and }\|\partial_{x_1}g(\cdot,t)-(c-\tfrac12)\|_{L^{\infty}}\to 0
\end{align*}
as $t\to 0$. Therefore, for any $0<c<1$ we deduce that  
\begin{align*}
|\nabla g| < \frac{1}{2}\quad\textrm{ for sufficiently small }t>0.
\end{align*}
This concludes the proof in the unstable case.

\smallskip

\noindent{\bf Stable case. }
Define the one-parameter family of diffeomorphisms 
$$
\Phi_t(s,\lambda)=\begin{pmatrix}s-\tfrac{\beta}{1+\beta^2}\lambda\\ z\left(s-\tfrac{\beta}{1+\beta^2}\lambda,t\right)+\lambda\end{pmatrix}\,
$$
with inverse map
$$
\Psi_t(x_1,x_2)=\begin{pmatrix}x_1+\tfrac{\beta}{1+\beta^2}(x_2-z(x_1,t))\\ x_2-z(x_1,t)\end{pmatrix}.
$$
Since $\det D\Phi_t=1$, it follows that $\Phi_t$ is a global $C^1$-diffeomorphism of $\R^2$. Moreover,  
since $\Phi_t(s,\lambda)\in \{x_2=z(x_1)+\lambda\}$ for any $s\in \R$, it follows that $\Phi_t$  maps $\R\times [-ct,ct]$ onto $\overline{\Omega_{mix}}$. Set
$$
\hat{g}(s,\lambda,t)=g(\Phi_t(x),t).
$$
and observe that 
$$
(\partial_\tau g)\left(s-\tfrac{\beta}{1+\beta^2}\lambda,z\left(s-\tfrac{\beta}{1+\beta^2}\lambda,t\right)+\lambda,t\right)=\frac{\partial}{\partial s}\left(\hat{g}(s,\lambda,t)\right).
$$
In order to satisfy \eqref{g3} we set
\begin{align*}
\hat{g}(s,\pm ct,t):=\int_{0}^{s\mp\tfrac{\beta}{1+\beta^2}ct} -(c+\tfrac{1}{2})\mp(\partial_t z -u_\nu^\pm)(s')\,ds'
\end{align*}
and, more generally, for $\lambda\in [-ct,ct]$ 
\begin{align*}
\hat{g}(s,\lambda,t):=\frac{ct+\lambda}{2ct}\hat{g}(s,ct,t)+\frac{ct-\lambda}{2ct}\hat{g}(s,-ct,t).
\end{align*}
Then
\begin{align*}
\partial_{\lambda}\hat{g}&(s,\lambda,t)=\frac{1}{2ct}(\hat{g}(s,ct,t)-\hat{g}(s,ct,t))=-\frac{1}{2ct}\int_0^s\left(2\partial_tz-u_\nu^+-u_{\nu}^-\right)\,ds'+\\
&+\frac{1}{2ct}\int_{s-\tfrac{\beta}{1+\beta^2}ct}^s(\partial_t z -u_\nu^+)(s')\,ds'+\frac{1}{2ct}\int_s^{s+\tfrac{\beta}{1+\beta^2}ct}(\partial_t z -u_\nu^-)(s')\,ds'\,,\\
\partial_s\hat{g}&(s,\lambda,t)=-(c+\tfrac{1}{2})+\\
&-\frac{\lambda+ct}{2ct} \left(\partial_t z - u_\nu^+ \right)(s-\tfrac{\beta}{1+\beta^2}ct) - \frac{\lambda-ct}{2ct} \left( \partial_t z - u_\nu^-  \right)(s+\tfrac{\beta}{1+\beta^2}ct).
\end{align*}
From the assumptions \eqref{z1}-\eqref{z2} it follows that
\begin{align*}
\|\partial_{\lambda}\hat{g}(\cdot,t)\|_{L^{\infty}}\to 0\quad\textrm{ and }\|\partial_{s}\hat{g}(\cdot,t)+(c+\tfrac12)\|_{L^{\infty}}\to 0
\end{align*}
as $t\to 0$. 
Furthermore, using that $\nabla g(x)=D\Psi^T(x)\nabla \hat{g}(\Psi(x))$, we can estimate
$$
|\nabla g|\leq \frac{1+\beta|\partial_s\bar{z}|}{1+\beta^2}|\partial_s\hat{g}|+(\sqrt{1+\beta^2}+|\partial_s\bar{z}|)|\partial_{\lambda}\hat{g}|.
$$
Therefore we obtain
\begin{align*}
|\nabla g|\leq \frac{1+|\beta|\|\partial_s\bar{z}\|_{L^{\infty}}}{1+\beta^2}(c+\tfrac12)+o(1)
\end{align*}
as $t\to 0$. Hence, provided 
\begin{equation}\label{e:stablesmallness}
\|\partial_s\bar{z}_0\|_{L^{\infty}}<|\beta|,
\end{equation}
for any $c<\tfrac{1}{2}\tfrac{|\beta|(|\beta|-\|\partial_s\bar{z}_0\|_{L^{\infty}})}{1+|\beta|\|\partial_s\bar{z}_0\|_{L^{\infty}}}$ we have $|\nabla g|<1/2$  for sufficiently small $t>0$. This concludes the proof in the stable case.

\end{proof}


\section{The velocity $u$}\label{s:u}

In this section we derive a concrete representation formula for the velocity $u$ and for the normal velocity component $u_\nu$ defined in \eqref{e:unu}, where $u$ is the solution of the system \eqref{e:BS}. It is well-known  \cite{mb:vorticity} that for sufficiently smooth $\rho$ the solution $v$ of 
\begin{align*}
\begin{cases}
\div v &= 0\\
\curl  v &= -\partial_{x_1} \rho
\end{cases} \quad \text{in }\R^2
\end{align*}
can be written using the Biot-Savart kernel as
\begin{align}\label{e:BSop}
v(x):= BS(-\partial_{x_1}\rho) := \frac{1}{2\pi} \int_{\R^2} \frac{(x-y)^{\perp}}{|x-y|^2} (-\partial_{x_1}\rho)(y) dy.
\end{align}
If the density $\rho$ is piecewise constant, with a jump across a sufficiently smooth interface $\Gamma$, the expression for $v(x)$ for $x\notin\Gamma$ can be derived by formally writing $\partial_{x_1} \rho$ as a delta distribution supported on $\Gamma$ \cite{cg:condynipm}. More precisely, in \cite{cg:condynipm}Ê(see Section 2 therein) the following expression was derived for the normal velocity component $v_\nu$ under the assumption that the interface is given by a graph $\Gamma=\{(s,z(s)):\,s\in \R\}$ with $z\in C^{1,\alpha}(\R)$ and 
$$
\rho(x)=\begin{cases} \rho^+& x_2>z(x_1),\\ \rho^-& x_2<z(x_1).\end{cases}
$$
For any $x=(x_1,x_2)\in\R^2$ we have
\begin{equation}\label{e:unu1}
\begin{split}
v_\nu(x)&:=v(x)\cdot\begin{pmatrix}-\partial_1z(x_1)\\1\end{pmatrix}\\
&=\frac{\rho^+-\rho^-}{2\pi}\textrm{PV}\int_{\R}\frac{\left(\partial_1z(x_1-\xi)-\partial_1z(x_1)\right)\xi}{\xi^2+(z(x_1-\xi)-x_2)^2}\,d\xi\,,
\end{split}
\end{equation}
where the principal value refers to the limit $\lim_{R\to\infty}\int_{-R}^R$. 

For the convenience of the reader we recall the argument leading up to formula \eqref{e:unu1}. First of all, by writing $\partial_{x_1} \rho$ as a delta distribution supported on $\Gamma$, from \eqref{e:BSop} one obtains
$$
v(x)=\frac{\rho^+-\rho^-}{2\pi}\textrm{PV}\int_{\R}\begin{pmatrix}z(\xi)-x_2\\ x_1-\xi\end{pmatrix}\frac{1}{(x_1-\xi)^2+(z(\xi)-x_2)^2}\partial_sz(\xi)\,d\xi
$$
for all $x\notin \Gamma$. Then, by using that
$$
\textrm{PV}\int_{\R}\partial_{\xi}\log\left((x_1-\xi)^2+(x_2-z(\xi))^2\right)\,d\xi=0,
$$
we deduce
$$
v(x)=\frac{\rho^+-\rho^-}{2\pi}\textrm{PV}\int_{\R}\begin{pmatrix}1\\ \partial_sz(\xi)\end{pmatrix}\frac{x_1-\xi}{(x_1-\xi)^2+(z(\xi)-x_2)^2}\,d\xi
$$
from which \eqref{e:unu1} follows. 


Then, for the density $\rho=\rho(x,t)$ defined in \eqref{e:dens} with interfaces $\Gamma^\pm$ in \eqref{e:Gammapm}, the normal velocity component \eqref{e:unu} on $\Gamma^\pm$ is given by the expression
\begin{equation}\label{e:u_nu}
u_{\nu}^{\pm}(s,t)=\frac{\rho^+-\rho^-}{4\pi}\textrm{PV}\int_{\R}\frac{\partial_sz(s-\xi,t)-\partial_sz(s,t)}{\xi}\Phi_{\pm}(\xi,s,t)\,d\xi,
\end{equation}
where 
$$
\Phi_{\pm}(\xi,s,t)=\frac{\xi^2}{\xi^2+(z(s-\xi,t)-z(s,t))^2}+\frac{\xi^2}{\xi^2+(z(s-\xi,t)-z(s,t)\mp 2ct)^2}\,.
$$
Motivated by this expression, consider the following $\Phi$-weighted variant of the Hilbert transform:
\begin{equation}\label{e:TPhi}
T_{\Phi}(f)(s):=\frac{1}{2\pi}\textrm{PV}\int_{\R}\frac{\partial_sf(s-\xi)-\partial_sf(s)}{\xi}\Phi(\xi,s)\,d\xi,
\end{equation}
for some bounded weight-function $\Phi=\Phi(\xi,s)$. For the weight we use the following norms: first of all we 
assume that $\Phi^{\infty}(s):=\lim_{|\xi|\to\infty}\Phi(\xi,s)$ exists, $\Phi(\cdot,s)\in C^1(\R\setminus\{0\})$, and set
\begin{equation}\label{e:farfieldPhi}
\begin{split}
\bar{\Phi}&=\xi\left(\Phi-\Phi^{\infty}\right),\quad \Phi^{\infty}=\lim_{|\xi|\to\infty}\Phi(\xi,s),\\
\tilde\Phi&=\xi^2\partial_{\xi}\left(\frac{1}{\xi}\Phi\right)=\xi\partial_{\xi}\Phi-\Phi\,.
\end{split}
\end{equation}
We introduce the norms
\begin{align*}
\vvvert \Phi\vvvert_0&:=\sup_{s\in\R,|\xi|\leq 1}|\Phi(\xi,s)|+\sup_{s\in\R,|\xi|>1}(|\bar{\Phi}(\xi,s)|+|\tilde\Phi(\xi,s)|),\\
\vvvert\Phi\vvvert_{k,\alpha}&:=\max_{j\leq k}\vvvert \partial_s^j\Phi\vvvert_0+[\partial_s^k\Phi]_\alpha+\sup_{|\xi|>1}([\partial_s^k\bar{\Phi}(\xi,\cdot)]_\alpha+[\partial_s^k\tilde\Phi(\xi,\cdot)]_\alpha),
\end{align*}
where we use the convention that $\|\Phi(\xi,\cdot)\|$ denotes a norm in the second argument only and $\|\Phi\|$ denotes a norm joint in both variables. In particular the H\"older-continuity of $\partial_s^k\Phi$ in both variables $\xi,s$ is required in the norm $\vvvert\Phi\vvvert_{k,\alpha}$. Accordingly, we define the spaces
\begin{align*}
\mathcal{W}^0&=\{\Phi\in L^{\infty}(\R^2):\,\Phi^\infty\textrm{ and }\partial_\xi\Phi\textrm{ exist, with }\vvvert \Phi\vvvert_0<\infty\},\\
\mathcal{W}^{k,\alpha}&=\{\Phi\in \mathcal{W}^0:\,\vvvert \Phi\vvvert_{k,\alpha}<\infty\}
\end{align*}
We have the following version of the classical estimate on the Hilbert transform $T_1=\mathcal{H}\nabla$ on H\"older-spaces:

\begin{theorem}\label{t:TPhi}
For any $\alpha>0$, $f\in C^{1,\alpha}_*(\R)$ and $\Phi\in \mathcal{W}^0$ we have
\begin{equation}\label{e:TPhi1}
\sup_{s}(1+|s|^{1+\alpha})|T_{\Phi}(f)(s)|\leq C \vvvert\Phi\vvvert_0\|f\|_{1,\alpha}^*\,.
\end{equation}
Moreover, for any $k\in\N$, $f\in C_*^{k+1,\alpha}(\R)$ and $\Phi\in \mathcal{W}^{k,\alpha}$ 
\begin{equation}\label{e:TPhi2}
\|T_{\Phi}(f)\|_{k,\alpha}^*\leq C\vvvert\Phi\vvvert_{k,\alpha}\|f\|_{k+1,\alpha}^*.
\end{equation}
where the constant depends only on $k$ and $\alpha$. 
\end{theorem}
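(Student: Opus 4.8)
The statement is a weighted, variable-coefficient Schauder estimate for the one-dimensional operator $\mathcal{H}\partial_s$ (note that $T_1=\mathcal H\partial_s$, which has Fourier symbol $|k|$). The plan is the classical one: split the kernel into a genuinely singular near-field $\{|\xi|\le 1\}$ and a far-field $\{|\xi|>1\}$, and reduce the case $k\ge 1$ to $k=0$ by commuting $\partial_s$ through $T_\Phi$.

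For \eqref{e:TPhi1}, only $\vvvert\Phi\vvvert_0$ is needed. On $\{|\xi|\le 1\}$ one uses $|\partial_sf(s-\xi)-\partial_sf(s)|\le[\partial_sf]^*_\alpha|\xi|^\alpha(1+|s|^{1+\alpha})^{-1}$ and $|\Phi|\le\vvvert\Phi\vvvert_0$, then integrates $|\xi|^{\alpha-1}$; this already produces the weight $(1+|s|^{1+\alpha})^{-1}$. On $\{|\xi|>1\}$ there is no singularity but $\xi^{-1}$ is not integrable, so one uses \eqref{e:farfieldPhi}: the $\bar\Phi/\xi$-part of $\Phi$ yields the integrable kernel $\bar\Phi/\xi^2$, while in the $\Phi^\infty(s)$-part one first discards $\partial_sf(s)$ via the exact cancellation $\textrm{PV}\int_{|\xi|>1}\xi^{-1}\,d\xi=0$ and then integrates by parts in $\xi$ (writing $\partial_sf(s-\xi)=-\partial_\xi[f(s-\xi)]$ and using $\partial_\xi(\Phi/\xi)=\tilde\Phi/\xi^2$), which moves the derivative off $f$ and replaces $\xi^{-1}$ by $\xi^{-2}$. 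Since $f$ and $\partial_sf$ both carry the decay $(1+|s|^{1+\alpha})^{-1}$ coming from $\|f\|^*_{1,\alpha}$, the far-field terms together with the boundary terms at $\xi=\pm1$ are controlled by $\vvvert\Phi\vvvert_0\|f\|^*_{1,\alpha}$, once one checks the elementary inequality $(1+|s|^{1+\alpha})\int_{|\xi|>1}(1+|s-\xi|^{1+\alpha})^{-1}\xi^{-2}\,d\xi\le C$ (which is where $1+\alpha\le 2$ enters, via $\xi^{-2}\lesssim(1+s^2)^{-1}$ on $\{|\xi|>|s|/2\}$).

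For \eqref{e:TPhi2} I would first reduce to $k=0$ by differentiating under the principal-value integral (justified by the regularity of $f$ and $\Phi$), which gives the commutator identity
\[
\partial_s T_\Phi(f)=T_\Phi(\partial_sf)+T_{\partial_s\Phi}(f),
\]
and hence $\partial_s^kT_\Phi(f)=\sum_{j=0}^k\binom{k}{j}T_{\partial_s^j\Phi}(\partial_s^{k-j}f)$; since the lower-order seminorms of $\Phi$ and $f$ are absorbed by $\vvvert\Phi\vvvert_{k,\alpha}$ and $\|f\|^*_{k+1,\alpha}$ (because $|h|\le1$ in the Hölder seminorms), it suffices to prove $\|T_\Phi(f)\|^*_{0,\alpha}\le C\vvvert\Phi\vvvert_{0,\alpha}\|f\|^*_{1,\alpha}$. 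Its weighted-sup part is \eqref{e:TPhi1}; for the weighted Hölder seminorm one estimates $T_\Phi(f)(s)-T_\Phi(f)(s-h)$ with $|h|\le1$. The key point is that membership of $\Phi$ in $\mathcal{W}^{0,\alpha}$ forces $\Phi$ to be $\alpha$-Hölder jointly in $(\xi,s)$, hence also in $\xi$ near $\xi=0$: writing $\Phi(\xi,s)=\Phi(0,s)+(\Phi(\xi,s)-\Phi(0,s))$ on $\{|\xi|\le1\}$, the kernel $\Phi(0,s)\,\xi^{-1}$ is a genuine Calder\'on--Zygmund kernel (odd, with $\textrm{PV}\int_{|\xi|\le1}\xi^{-1}d\xi=0$), so up to the bounded factor $\Phi(0,s)$ that part of $T_\Phi$ reduces to the localized operator $\mathcal H\partial_s$, for which the classical Hölder estimate holds (in the $s$-difference at scale $|\xi|\sim|h|$ one uses the kernel-difference identity, replacing $\xi^{-1}$ by the $|h|\,\xi^{-2}$ kernel on $\{|\xi|>2|h|\}$, which is exactly what avoids a logarithmic loss); the complementary kernel $(\Phi(\xi,s)-\Phi(0,s))\xi^{-1}$ is only weakly singular, $O(|\xi|^{\alpha-1})$, and the extra factor $|\xi|^\alpha$ closes the Hölder difference with no logarithm. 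It remains to control the variation of the weight, $\Phi(\xi,s)-\Phi(\xi,s-h)$, which is handled by the $s$-Hölder norms of $\Phi$, $\bar\Phi$ and $\tilde\Phi$ — precisely the quantities built into $\vvvert\Phi\vvvert_{0,\alpha}$ — and the far field $\{|\xi|>1\}$, handled by the same $\bar\Phi,\tilde\Phi$-decomposition and integration by parts in $\xi$ as in \eqref{e:TPhi1}, now applied to the $s$-difference; throughout one uses that $1+|s|^{1+\alpha}$ and $1+|s-h|^{1+\alpha}$ are comparable for $|h|\le1$, and the range $|h|\gtrsim1$ is disposed of directly from \eqref{e:TPhi1}.

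The main obstacle is organizing this last Hölder estimate so that the singularity at $\xi=0$, the slow decay of the kernel as $\xi\to\infty$ (the very reason $\bar\Phi$ and $\tilde\Phi$, together with the integration by parts in $\xi$, appear in the norms), and the base-point dependence of $\Phi$ are all controlled at once, while keeping the polynomial weight $(1+|s|^{1+\alpha})$ honest in every term; once these decompositions are in place, each of the resulting integrals is elementary.
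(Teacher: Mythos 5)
Your proposal is correct and follows essentially the same route as the paper: the same split into the near field $\{|\xi|<1\}$ and the far field $\{|\xi|>1\}$ with integration by parts in $\xi$ producing exactly the $\bar\Phi$- and $\tilde\Phi$-kernels and the boundary terms at $\xi=\pm1$, the same frozen-coefficient decomposition $\Phi(\xi,s)=\Phi(0,s)+(\Phi(\xi,s)-\Phi(0,s))$ together with the cancellation of the odd kernel for the H\"older seminorm of the near-field term, and the same Leibniz-rule reduction of $k\geq 1$ to $k=0$. No gaps; the only implicit restriction you correctly identify ($\alpha\leq 1$, needed for the weighted far-field bound) is also implicit in the paper's estimate of its term $I_4$.
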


\begin{proof}
We start the proof by rewriting the principal value integral in \eqref{e:TPhi} as a sum of absolutely convergent terms. To this end we split the integral
$\int_{\R}\,d\xi=\int_{|\xi|<1}\,d\xi+\int_{|\xi|>1}\,d\xi$ and integrate by parts in the second term. We obtain
\begin{align*}
T_{\Phi}(f)(s)=&\frac{1}{2\pi}\int_{|\xi|<1}\frac{\partial_sf(s-\xi)-\partial_sf(s)}{\xi}\Phi(\xi,s)\,d\xi\\
&-\frac{1}{2\pi}\partial_sf(s)\textrm{PV}\int_{|\xi|>1}\frac{1}{\xi}\Phi(\xi,s)\,d\xi\\
&+\frac{1}{2\pi}\left(f(s-1)\Phi(1,s)+f(s+1)\Phi(-1,s)\right)\\
&+\frac{1}{2\pi}\int_{|\xi|>1}f(s-\xi)\partial_{\xi}\left(\frac{1}{\xi}\Phi(\xi,s)\right)\,d\xi\\
=&\frac{1}{2\pi}\int_{|\xi|<1}\frac{\partial_sf(s-\xi)-\partial_sf(s)}{\xi}\Phi(\xi,s)\,d\xi\tag{$I_1$}\\
&-\frac{1}{2\pi}\partial_sf(s)\int_{|\xi|>1}\frac{1}{\xi^2}\bar{\Phi}(\xi,s)\,d\xi\tag{$I_2$}\\
&+\frac{1}{2\pi}\left(f(s-1)\Phi(1,s)+f(s+1)\Phi(-1,s)\right)\tag{$I_3$}\\
&+\frac{1}{2\pi}\int_{|\xi|>1}\frac{f(s-\xi)}{\xi^2}\tilde\Phi(\xi,s)\,d\xi,\tag{$I_4$}
\end{align*}
where $\bar\Phi$ and $\tilde\Phi$ are related to $\Phi$ as in \eqref{e:farfieldPhi}.

It is easy to see that 
$$
\sup_s(1+|s|^{1+\alpha})|I_i(s)|\leq C\vvvert\Phi\vvvert_0\|f\|_{1,\alpha}^*
$$
for $i=1,2,3$. For the term $I_4$ observe that if $|\xi|<\tfrac12|s|$ or $|\xi|>\tfrac32|s|$, then $|s-\xi|>\tfrac12|s|$. Therefore we have for any $|s|>2$
\begin{align*}
|I_4|&\leq \int_{1<|\xi|<\tfrac12|s|\textrm{ or }|\xi|>\tfrac32|s|}\frac{|f(s-\xi)|}{\xi^2}|\tilde\Phi(\xi,s)|\,d\xi+\int_{\tfrac12|s|<|\xi|<\tfrac32|s|}\frac{|f(s-\xi)|}{\xi^2}|\tilde\Phi(\xi,s)|\,d\xi\\
&\leq C|s|^{-(1+\alpha)}\|f\|_{1,\alpha}^*\int_{|\xi|>1}\frac{|\tilde\Phi(\xi,s)|}{\xi^2}\,d\xi+\frac{4\|f\|_{1,\alpha}^*}{s^2}\int_{\tfrac12|s|<|\tau|<\tfrac52|s|}\frac{|\tilde\Phi(s-\tau,s)|}{1+|\tau|^{1+\alpha}}\,d\tau\\
&\leq C  |s|^{-(1+\alpha)}\sup_{s\in\R,|\xi|>1}|\tilde\Phi(\xi,s)|\|f\|_{1,\alpha}^*\,.
\end{align*}
This concludes the proof of \eqref{e:TPhi1}. 

The H\"older continuity of $I_2$, $I_3$ and $I_4$ is easily handled analogously and leads to the estimates
\begin{align*}
[I_2]_{\alpha}^*&\leq C\|\partial_sf\|_{\alpha}^*\sup_{|\xi|>1}\|\bar{\Phi}(\xi,\cdot)\|_\alpha\,,\\
[I_3]_{\alpha}^*&\leq C\|f\|_{\alpha}^*\sup_{\xi}\|\Phi(\xi,\cdot)\|_{\alpha}\,,\\
[I_4]_{\alpha}^*&\leq C\|f\|_{\alpha}^*\sup_{|\xi|>1}\|\tilde{\Phi}(\xi,\cdot)\|_{\alpha}\,.
\end{align*}
Next, we consider $I_1=I_1(s)$ and write for simplicity $g(s)=\partial_sf(s)$. For $|\eta|<1/2$ let $\tilde s=s-\eta$ and write
\begin{align*}
I_1(\tilde s)&=\int_{|s-\eta-\xi|<1}\frac{g(\xi)-g(\tilde s)}{\tilde s-\xi}\Phi(\tilde s-\xi,\tilde s)\,d\xi\\
&=\int_{|s-\xi|<1}\frac{g(\xi)-g(\tilde s)}{\tilde s-\xi}\Phi(\tilde s-\xi,\tilde s)\,d\xi+I_{11},
\end{align*}
where $I_{11}$ is an integral over intervals of total length $\sim|\eta|$ on which $|\tilde s-\xi|>1/2$. Therefore
$$
|I_{11}|\leq C|\eta|(1+|s|^{1+\alpha})^{-1}\|f\|_{1,\alpha}^*\vvvert\Phi\vvvert_0.
$$
Next, we write, with $r=2|\eta|$
\begin{align*}
I_1(\tilde s)-I_1(s)-I_{11}&=\int_{|s-\xi|<1}\frac{g(\xi)-g(\tilde s)}{\tilde s-\xi}\Phi(\tilde s-\xi,\tilde s)-\frac{g(\xi)-g(s)}{s-\xi}\Phi(s-\xi,s)\,d\xi\\
=&\int_{|s-\xi|<r}\frac{g(\xi)-g(\tilde s)}{\tilde s-\xi}\Phi(\tilde s-\xi,\tilde s)-\frac{g(\xi)-g(s)}{s-\xi}\Phi(s-\xi,s)\,d\xi\tag{$I_{12}$}\\
&+\int_{r<|s-\xi|<1}\frac{g(s)-g(\tilde s)}{\tilde s-\xi}(\Phi(\tilde s-\xi,\tilde s)-\Phi(0,\tilde s))\,d\xi\tag{$I_{13}$}\\
&+\int_{r<|s-\xi|<1}(g(\xi)-g(s))(\frac{1}{\tilde s-\xi}-\frac{1}{s-\xi})\Phi(\tilde s-\xi,\tilde s),d\xi\tag{$I_{14}$}\\
&+\int_{r<|s-\xi|<1}\frac{g(\xi)-g(s)}{s-\xi}(\Phi(\tilde s-\xi,\tilde s)-\Phi(s-\xi,s))\,d\xi\tag{$I_{15}$}\\
&+\int_{r<|s-\xi|<1}\frac{g(s)-g(\tilde s)}{\tilde s-\xi}\Phi(0,\tilde s),d\xi.\tag{$I_{16}$}
\end{align*}
We can estimate each term as follows:
\begin{align*}
(1+|s|^{1+\alpha})|I_{12}|&\leq C[g]_{\alpha}^*\|\Phi\|_0\int_{|s-\xi|<r}|\tilde s-\xi|^{\alpha-1}+|s-\xi|^{\alpha-1}\,d\xi \\
&\leq C[g]_{\alpha}^*\|\Phi\|_0|\eta|^{\alpha}\\
(1+|s|^{1+\alpha})|I_{13}|&\leq C[g]_{\alpha}^*[\Phi]_{\alpha}|\eta|^{\alpha}\int_{|s-\xi|<1}|\tilde s-\xi|^{\alpha-1}\,d\xi\\
&\leq C[g]_{\alpha}^*[\Phi]_\alpha|\eta|^{\alpha}\\
(1+|s|^{1+\alpha})|I_{14}|&\leq C[g]_{\alpha}^*\|\Phi\|_0|\eta|\int_{r<|s-\xi|<1}|s-\xi|^{\alpha-1}|\tilde s-\xi|^{-1}\,d\xi\\
&\leq C[g]_{\alpha}^*\|\Phi\|_0|\eta|^{\alpha}\\
(1+|s|^{1+\alpha})|I_{15}|&\leq C[g]_{\alpha}^*[\Phi]_{\alpha}|\eta|^{\alpha}
\end{align*}
whereas, using that $\int_{r<|s-\xi|<1}\frac{1}{s-\xi}\,d\xi=0$
\begin{align*}
(1+|s|^{1+\alpha})|I_{16}(s)|&=|g(s)-g(\tilde s)||\Phi(0,\tilde s)|\left|\int_{r<|s-\xi|<1}\frac{1}{\tilde s-\xi}-\frac{1}{s-\xi}\,d\xi\right|\\
&\leq C[g]_{\alpha}^*\|\Phi\|_0|\eta|^{\alpha}\,.
\end{align*}
We conclude that 
$$
\|I_1\|_{\alpha}^*\leq C\|\Phi\|_{\alpha}\|\partial_sf\|_{\alpha}^*,
$$
and this finally proves \eqref{e:TPhi2} for $k=0$. For $k\geq 1$ the estimate follows from differentiating the terms $I_1(s),\dots,I_4(s)$ with respect to $s$ and applying the Leibniz rule.
\end{proof}

We close this section by showing that, under quite general conditions, $\Phi_{\pm}$ belongs to the weight-space $\mathcal{W}^0$:

\begin{lemma}\label{l:Tpm}
	Suppose that $z=z(s,t)=\beta s+\bar{z}(s,t)$ with $\bar{z}\in C([0,T];C^{1,\alpha}(\R))$ for some $0<\alpha<1$, $\beta\in\R$ and $T<\infty$. Then $\Phi_{\pm}\in \mathcal{W}^0$ with $\sup_{t\in[0,T]}\vvvert\Phi_{\pm}\vvvert_0<\infty$.   
\end{lemma}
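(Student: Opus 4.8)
The plan is to verify by hand the three suprema entering the norm $\vvvert\,\cdot\,\vvvert_0$ for each fixed $t$ and then to observe that the resulting bound is uniform on $[0,T]$. First I would record that, since $\bar z\in C([0,T];C^{1,\alpha}(\R))$ and $[0,T]$ is compact,
\[
M:=\sup_{t\in[0,T]}\|\bar z(\cdot,t)\|_{C^{1,\alpha}(\R)}<\infty ,
\]
so in particular $\|\bar z(\cdot,t)\|_{L^\infty}\le M$ and $\|\partial_s\bar z(\cdot,t)\|_{L^\infty}\le M$ for all $t$. Writing $d=d(\xi,s,t):=z(s-\xi,t)-z(s,t)=-\beta\xi+e$ with $e:=\bar z(s-\xi,t)-\bar z(s,t)$, one has the pointwise bounds $|e|\le 2M$ and $|\partial_\xi d|=|\partial_s z(s-\xi,t)|\le|\beta|+M$. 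Then $\Phi_\pm=h_1+h_2^\pm$, where $h_1=\xi^2/(\xi^2+d^2)$ and $h_2^\pm=\xi^2/(\xi^2+(d\mp 2ct)^2)$; the second summand is of exactly the same form with $e$ replaced by $e\mp 2ct$, still bounded by $2M+2cT$, and with the same derivative bound. Thus it suffices to estimate a generic term $h=\xi^2/(\xi^2+q^2)$ with $q=-\beta\xi+e$, $|e|\le K$, $|\partial_\xi q|\le K'$, for constants $K,K'$ independent of $s$ and $t$.

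Two of the required estimates are immediate. Since $0\le \xi^2/(\xi^2+q^2)\le 1$, we get $\|\Phi_\pm\|_{L^\infty}\le 2$, which in particular bounds $\sup_{s,|\xi|\le 1}|\Phi_\pm|$. For the far field, completing the square gives $\xi^2+q^2=(1+\beta^2)\xi^2-2\beta\xi e+e^2$, hence $h\to 1/(1+\beta^2)$ as $|\xi|\to\infty$, so $\Phi_\pm^\infty(s)=2/(1+\beta^2)$ exists (and is in fact constant in $s$ and $t$). Moreover $q$ is $C^1$ in $\xi$ because $z(\cdot,t)\in C^1$, and $\xi^2+q^2>0$ for $\xi\neq 0$, so $\partial_\xi\Phi_\pm$ exists for $\xi\neq 0$ and $\Phi_\pm(\cdot,s,t)\in C^1(\R\setminus\{0\})$; thus $\Phi_\pm\in\mathcal W^0$ once the two far-field quantities $\bar\Phi_\pm$ and $\tilde\Phi_\pm$ are controlled on $|\xi|>1$.

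For $\bar\Phi_\pm=\xi(\Phi_\pm-\Phi_\pm^\infty)$ I would use $(1+\beta^2)\xi^2-(\xi^2+q^2)=\beta^2\xi^2-q^2=(\beta\xi-q)(\beta\xi+q)$ with $\beta\xi+q=e$, so that
\[
h-\frac1{1+\beta^2}=\frac{(\beta\xi-q)\,e}{(1+\beta^2)(\xi^2+q^2)} ,
\]
and, using $|\beta\xi-q|\le 2|\beta||\xi|+K$, $|e|\le K$ and $\xi^2+q^2\ge\xi^2$, one gets $\bigl|\xi\bigl(h-\tfrac1{1+\beta^2}\bigr)\bigr|\le C(K,\beta)$ for $|\xi|>1$ — the point being that the apparent linear growth in $\xi$ of the numerator is absorbed by the factor $\xi^2$ in the denominator. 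For $\tilde\Phi_\pm=\xi\partial_\xi\Phi_\pm-\Phi_\pm$, a direct computation gives $\partial_\xi h=2\xi q(q-\xi\partial_\xi q)/(\xi^2+q^2)^2$; on $|\xi|>1$ one has $|q|\le(|\beta|+K)|\xi|$ and $|q-\xi\partial_\xi q|\le(|\beta|+K+K')|\xi|$, while $\xi^2+q^2\ge\xi^2$, whence $|\xi\partial_\xi h|\le C(K,K',\beta)$ and therefore $|\tilde h|\le C(K,K',\beta)$. Summing the contributions of $h_1$ and $h_2^\pm$ yields $\vvvert\Phi_\pm\vvvert_0\le C(M,\beta,cT)$, uniformly in $t\in[0,T]$, which is the assertion. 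I do not expect any real obstacle here; the only points requiring a little care are to extract the algebraic cancellations (completing the square, and factoring $\beta^2\xi^2-q^2=(\beta\xi-q)(\beta\xi+q)$) that prevent spurious growth in $\xi$, and to note that all constants are uniform in $t$ precisely because $\bar z$ is continuous on the compact interval $[0,T]$.
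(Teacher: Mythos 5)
Your proof is correct and follows essentially the same route as the paper: the uniform bound $|\Phi_\pm|\le 2$, the identification $\Phi_\pm^\infty=2/(1+\beta^2)$, the factorization of $\beta^2\xi^2-q^2$ (equivalently the paper's $\beta^2-Z_t^2=(\beta+Z_t)(\beta-Z_t)$ with $\xi(Z_t-\beta)$ bounded by $2\|\bar z\|_{L^\infty}+2cT$) to control $\bar\Phi_\pm$, and the explicit computation of $\xi\partial_\xi\Phi_\pm$ to control $\tilde\Phi_\pm$, all uniformly in $t$ by compactness of $[0,T]$. The only cosmetic difference is that you work with $d=z(s-\xi,t)-z(s,t)$ directly rather than the difference quotient $Z_t=d/(-\xi)$ used in the paper.
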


\begin{proof}
First of all we easily see that
$$
\sup_{s,\xi,t}|\Phi_{\pm}(\xi,s,t)|\leq 2.
$$
For simplifying the notation, set
\begin{equation}\label{e:Zt}
Z_t(\xi,s)=\frac{z(s,t)-z(s-\xi,t)}{\xi}=\int_0^1\partial_sz(s-\tau\xi,t)\,d\tau,
\end{equation}
so that we may write for any $\xi\neq 0$
$$
\Phi_{\pm}=\frac{1}{1+Z_t^2}+\frac{1}{1+(Z_t\pm\tfrac{2ct}{\xi})^2}.
$$
Moreover, observe that $\sup_{\xi,s}|Z_t(\xi,s)|\leq \|\partial_sz\|_{L^{\infty}}$ and, 
since $Z_t=\beta+\frac{\bar{z}(s,t)-\bar{z}(s-\xi,t)}{\xi}$, $\lim_{|\xi|\to\infty}Z_t(\xi,s)=\beta$ uniformly in $s\in\R$. 
Therefore, with the notation from \eqref{e:farfieldPhi},
$$
\Phi_{\pm}^{\infty}=\frac{2}{1+\beta^2},
$$
and
$$
\overline{\Phi_{\pm}}=\xi\frac{\beta^2-Z_t^2}{(1+Z_t^2)(1+\beta^2)}+\xi\frac{\beta^2-(Z_t\pm\tfrac{2ct}{\xi})^2}{(1+(Z_t\pm\tfrac{2ct}{\xi})^2)(1+\beta^2)}.
$$
Since
$$
\sup_{s,\xi}|\xi(Z_t\pm\tfrac{2ct}{\xi}-\beta)|=\sup_{s,\xi}|\bar{z}(s,t)-\bar{z}(s-\xi,t)\pm 2ct|\leq 2\|\bar{z}\|_{L^{\infty}}+2ct,
$$
we deduce that 
$$
\sup_{s,|\xi|>1}|\overline{\Phi_{\pm}}|\leq C,
$$
with the constant $C$ depending on $\|\bar{z}\|_{L^{\infty}}$, $\beta$, $c$ and $T$.

Next, we calculate:
$$
\xi\partial_\xi\Phi_{\pm}=\frac{-2Z_t\xi\partial_\xi Z_t}{(1+Z_t^2)^2}+\frac{-2(Z_t\pm\tfrac{2ct}{\xi})(\xi\partial_\xi Z_t\mp\tfrac{2ct}{\xi})}{(1+(Z_t\pm\tfrac{2ct}{\xi})^2)^2}\,.
$$ 
Since 
$$
\xi\partial_\xi Z_t=\partial_sz(s-\xi,t)-\frac{z(s,t)-z(s-\xi,t)}{\xi}=\partial_s\bar{z}(s-\xi,t)-\frac{\bar{z}(s,t)-\bar{z}(s-\xi,t)}{\xi},
$$
we deduce that $\sup_{s,|\xi|>1}|\xi\partial_\xi\Phi_{\pm}|$ and hence $\sup_{s,|\xi|>1}|\tilde\Phi_{\pm}|$ is bounded uniformly in $t\in[0,T]$. This concludes the proof.

\end{proof}


\section{Construction of the curve $z$}\label{s:z}

In this section we construct a function $z=z(s,t)$ satisfying the conditions of Theorem \ref{Subs}. 
In order to motivate the construction, observe that \eqref{z1}-\eqref{z2} suggest that it suffices to
specify $z$ up to order $t^2$. Therefore we start by formally calculating the expressions for the initial velocity $u_\nu^\pm\vert_{t=0}$ and initial symmetrized acceleration $\partial_t\vert_{t=0}\frac{u_\nu^++u_\nu^-}{2}$. 

Let $z=z(s,t)=\beta s+\bar{z}(s,t)$ with $\bar{z}\in C^2([0,T);C_*^{1,\alpha}(\R))$ for some $\beta\in\R$ and $\alpha\in(0,1)$. Using the expression \eqref{e:u_nu} and the notation introduced in \eqref{e:TPhi} we have
$$
u_{\nu}^\pm=\frac{\rho^+-\rho^-}{2}T_{\Phi_{\pm}}z,
$$
where $\rho^{\pm}=\pm 1$ in the unstable case and $\rho^{\pm}=\mp 1$ in the stable case. This difference in sign has no effect on the computations and on Theorem \ref{t:z} below, therefore we will from now on treat the unstable case without loss of generality.  
Hence, in particular
\begin{equation*}
	u_{\nu}^\pm\big\vert_{t=0}=u_{\nu}^0:=T_{\Phi_{0}}\bar{z}_0,
\end{equation*}
where 
\begin{equation}\label{e:Phi0}
\Phi_0(\xi,s):=\frac{2\xi^2}{\xi^2+(z_0(s-\xi)-z_0(s))^2}
\end{equation}
and $z_0(s)=z(s,0)$. Observe that although $\Phi_{\pm}(\xi,s)\to\Phi_{0}(\xi,s)$ as $t\to 0$ for any $\xi\neq 0$, the limit is not uniform in $\xi$, therefore in particular $\Phi_{\pm}\nrightarrow \Phi_0$ in the norm of $\mathcal{W}^0$. Nevertheless we have

\begin{lemma}\label{l:continuity}
Assume that $z(s,t)=\beta s+\bar{z}(s,t)$ with $\bar{z}\in C^0([0,T);C_*^{1,\alpha}(\R))$ for some $\beta\in\R$ and $\alpha\in(0,1)$. Then for any $f\in C^{1,\alpha}_*(\R)$
$$
\lim_{t\to 0}\sup_{s\in\R}(1+|s|^{1+\alpha})\left|T_{\Phi_{\pm}}f(s)-T_{\Phi_0}f(s)\right|=0.
$$
\end{lemma}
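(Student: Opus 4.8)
\emph{Proof plan.} The plan is to reduce the statement, by linearity of $\Phi\mapsto T_\Phi$, to showing
\[
\sup_{s\in\R}(1+|s|^{1+\alpha})\,|T_{\Psi_t}f(s)|\longrightarrow 0\quad\text{as }t\to0^+,\qquad \Psi_t:=\Phi_\pm-\Phi_0,
\]
where the sign in $\Psi_t$ is fixed but immaterial. Two preliminary facts make this manageable. First, $\Phi_\pm^\infty=\Phi_0^\infty=\tfrac{2}{1+\beta^2}$ (computed in the proof of Lemma~\ref{l:Tpm}), so $\Psi_t^\infty=0$; consequently $\bar\Psi_t=\xi\Psi_t$ and, since moreover $\partial_s f$ is H\"older and $\partial_s f(s-\xi)$ decays, the integrand $\tfrac1\xi(\partial_s f(s-\xi)-\partial_s f(s))\Psi_t(\xi,s)$ is $O(|\xi|^{\alpha-1})$ at $\xi=0$ and $O(|\xi|^{-2})$ at $|\xi|=\infty$, hence absolutely integrable — so no principal value is needed for $T_{\Psi_t}f$. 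Second, since $C^{1,\alpha}_*(\R)\hookrightarrow C^{1,\alpha}(\R)$, Lemma~\ref{l:Tpm} applies on any $[0,t_0]\subset[0,T)$ and provides $M<\infty$ with $\sup_{t\in[0,t_0]}\vvvert\Psi_t\vvvert_0\le M$; in particular $|\Psi_t(\xi,s)|\le M$ for $|\xi|\le 1$ and $\sup_{t\in[0,t_0]}\|\partial_s\bar z(\cdot,t)\|_{L^\infty}<\infty$.

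The heart of the argument is a two-scale splitting of $\int_\R$ at $|\xi|=\delta$. Integrating by parts on $\{|\xi|>\delta\}$ exactly as in the proof of Theorem~\ref{t:TPhi} (with the radius $1$ replaced by $\delta$, and using $\bar\Psi_t=\xi\Psi_t$ and $\tilde\Psi_t:=\xi\partial_\xi\Psi_t-\Psi_t$) one gets
\begin{align*}
T_{\Psi_t}f(s)={}&\underbrace{\frac1{2\pi}\int_{|\xi|<\delta}\frac{\partial_s f(s-\xi)-\partial_s f(s)}{\xi}\,\Psi_t(\xi,s)\,d\xi}_{A_\delta(s)}+\underbrace{\frac{f(s-\delta)\Psi_t(\delta,s)+f(s+\delta)\Psi_t(-\delta,s)}{2\pi\delta}}_{B_\delta(s)}\\
&-\underbrace{\frac{\partial_s f(s)}{2\pi}\int_{|\xi|>\delta}\frac{\bar\Psi_t(\xi,s)}{\xi^2}\,d\xi}_{C_\delta(s)}+\underbrace{\frac1{2\pi}\int_{|\xi|>\delta}\frac{f(s-\xi)}{\xi^2}\,\tilde\Psi_t(\xi,s)\,d\xi}_{D_\delta(s)}.
\end{align*}
For $A_\delta$ one uses $|\partial_s f(s-\xi)-\partial_s f(s)|\le[\partial_s f]^*_\alpha|\xi|^\alpha(1+|s|^{1+\alpha})^{-1}$ (valid for $|\xi|\le1$) and $|\Psi_t|\le M$ to get $\sup_s(1+|s|^{1+\alpha})|A_\delta(s)|\le \tfrac{M}{\pi\alpha}\delta^\alpha\|f\|_{1,\alpha}^*$, \emph{uniformly in $t\in[0,t_0]$}; this is precisely what lets us handle the dangerous region $|\xi|<\delta$. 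The terms $B_\delta,C_\delta,D_\delta$ are estimated (with $\delta$ fixed, so constants may depend on $\delta$) by repeating \textit{mutatis mutandis} the $I_3$-, $I_2$- and $I_4$-estimates from the proof of Theorem~\ref{t:TPhi}, which bounds $\sup_s(1+|s|^{1+\alpha})(|B_\delta|+|C_\delta|+|D_\delta|)$ by $C_\delta\|f\|_{1,\alpha}^*\,\lambda_\delta(t)$ with
\[
\lambda_\delta(t):=\sup_{s\in\R,\ |\xi|\ge\delta}\big(|\Psi_t(\xi,s)|+|\bar\Psi_t(\xi,s)|+|\tilde\Psi_t(\xi,s)|\big).
\]
Given $\eps>0$ one first fixes $\delta\in(0,1)$ so that $\tfrac{M}{\pi\alpha}\delta^\alpha\|f\|_{1,\alpha}^*<\eps/2$, and the proof is then complete provided one shows $\lambda_\delta(t)\to0$ as $t\to0^+$ for this (or any) fixed $\delta>0$.

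Proving $\lambda_\delta(t)\to0$ is the only genuinely new estimate, and the only place where the \emph{time-continuity} of $\bar z$ is used. In the notation of the proof of Lemma~\ref{l:Tpm} — $Z_t(\xi,s)=\frac{z(s,t)-z(s-\xi,t)}{\xi}$ and $\xi\partial_\xi Z_t=\partial_s\bar z(s-\xi,t)-\frac{\bar z(s,t)-\bar z(s-\xi,t)}{\xi}$ — one writes $\Phi_\pm=\frac1{1+Z_t^2}+\frac1{1+(Z_t\pm\frac{2ct}{\xi})^2}$ and $\Phi_0=\frac2{1+Z_0^2}$, and expresses $\bar\Psi_t=\xi\Psi_t$ and $\tilde\Psi_t=\tilde\Phi_\pm-\tilde\Phi_0$ as \emph{fixed} rational functions of the five quantities $Z_t,\ \xi\partial_\xi Z_t,\ Z_0,\ \xi\partial_\xi Z_0,\ \tfrac{2ct}{\xi}$, each vanishing identically whenever $Z_t=Z_0$, $\xi\partial_\xi Z_t=\xi\partial_\xi Z_0$ and $\tfrac{2ct}{\xi}=0$. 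On $\{|\xi|\ge\delta\}$ these arguments stay in a fixed bounded set for $t\in[0,t_0]$ (using that $\|\partial_s\bar z(\cdot,t)\|_{L^\infty}$ is bounded and $|\tfrac{2ct}{\xi}|\le\tfrac{2cT}{\delta}$), while the telescoping identities
\[
\xi(Z_t-Z_0)=(\bar z(s,t)-\bar z(s-\xi,t))-(\bar z(s,0)-\bar z(s-\xi,0)),
\]
\[
\xi\partial_\xi(Z_t-Z_0)=(\partial_s\bar z(s-\xi,t)-\partial_s\bar z(s-\xi,0))-\tfrac1\xi\big((\bar z(s,t)-\bar z(s,0))-(\bar z(s-\xi,t)-\bar z(s-\xi,0))\big)
\]
show (dividing the left sides by $|\xi|\ge\delta$) that $Z_t\to Z_0$, $\xi\partial_\xi Z_t\to\xi\partial_\xi Z_0$ and $\tfrac{2ct}{\xi}\to0$ \emph{uniformly on $\{|\xi|\ge\delta\}$} as $t\to0^+$, because continuity of $\bar z$ into $C^{1,\alpha}_*(\R)$ forces $\|\bar z(\cdot,t)-\bar z(\cdot,0)\|_{L^\infty}+\|\partial_s\bar z(\cdot,t)-\partial_s\bar z(\cdot,0)\|_{L^\infty}\to0$. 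Uniform continuity of the rational functions on the relevant compact set then yields $\lambda_\delta(t)\to0$, finishing the proof.

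The main obstacle is exactly the non-uniformity flagged before the Lemma: $\Psi_t\not\to0$ in $\mathcal W^0$, since for $|\xi|$ no larger than a fixed multiple of $t$ the term $\tfrac{2ct}{\xi}$ is of order one. The two-scale splitting circumvents this — the bad region $|\xi|<\delta$ feeds only into $A_\delta$, which is small for small $\delta$ \emph{uniformly in $t$} by nothing more than the uniform $L^\infty$-bound on $\Psi_t$ and the integrability of $|\xi|^{\alpha-1}$, whereas true $t\to0$ convergence is only needed on $\{|\xi|\ge\delta\}$, where $\tfrac{2ct}{\xi}$ is controlled. A secondary subtlety to watch is that $\bar\Psi_t$ and $\tilde\Psi_t$ carry an extra factor $\xi$ (resp. $\xi\partial_\xi$); this is harmless precisely because $\xi(Z_t-Z_0)$ telescopes to a difference of values of $\bar z$ and $\xi\cdot\tfrac{2ct}{\xi}=2ct$ — which is why the estimates are run in terms of $\bar z$ rather than of $z$.
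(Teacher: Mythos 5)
Your proof is correct and follows essentially the same route as the paper's: both isolate the region near $\xi=0$, where only the uniform $L^\infty$-bound on $\Phi_\pm-\Phi_0$ together with the integrability of $|\xi|^{\alpha-1}$ is used, from the region away from $\xi=0$, where $\Phi_\pm\to\Phi_0$ uniformly (together with the weighted quantities $\bar\Phi$, $\tilde\Phi$). The only immaterial difference is that the paper cuts the inner term $I_1$ at the $t$-dependent radius $t^{1/2}$, whereas you cut at a fixed $\delta$ and run an $\eps/2$-argument.
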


\begin{proof}
In analogy with \eqref{e:Zt} we set
\begin{equation}\label{e:Z01}
Z_0(\xi,s)=\frac{z_0(s)-z_0(s-\xi)}{\xi},
\end{equation}
so that $\Phi_0=\frac{2}{1+Z_0^2}$.
In the following we consider without loss of generality $\Phi_+(t)-\Phi_0$.  

As pointed out above, the limit $\lim_{t\to 0}\Phi_+(t)$ is not uniform in $\xi$ because of the singularity at $\xi=0$. Therefore we need to modify the argument in the proof of  \eqref{e:TPhi1}. To this end recall the decomposition 
$$
T_{\Phi_{+}}f-T_{\Phi_0}f=T_{(\Phi_+(t)-\Phi_0)}f=I_1+I_2+I_3+I_4
$$
in the proof of Theorem \ref{t:TPhi}
and focus for the moment on the term
$$
I_1(s,t)=\frac{1}{2\pi}\int_{|\xi|<1}\frac{\partial_sf(s-\xi)-\partial_sf(s)}{\xi}(\Phi_+-\Phi_0)(\xi,s,t)\,d\xi.
$$
Using the definition of $\Phi_+$ and $\Phi_0$ we write for $\xi\neq 0$
\begin{align*}
\Phi_+&-\Phi_0=\frac{1}{1+Z_t^2}+\frac{1}{1+(Z_t+\frac{2ct}{\xi})^2}-\frac{2}{1+Z_0^2}\\
&=\frac{Z_0+Z_t}{(1+Z_0^2)(1+Z_t^2)}(Z_0-Z_t)+\frac{Z_0+Z_t+\frac{2ct}{\xi}}{(1+Z_0^2)(1+(Z_t+\tfrac{2ct}{\xi})^2)}(Z_0-Z_t-\tfrac{2ct}{\xi}).
\end{align*}
It is easy to see that $\sup_{\xi,s,t}|\Phi_+|\leq 2$. Moreover 
\begin{align*}
\sup_{\xi,s}|Z_t-Z_0|&\leq \sup_{s}|\partial_sz(s,t)-\partial_sz_0(s)|\overset{t\to 0}{\longrightarrow} 0,\\
\sup_{t^{1/2}<|\xi|}|\tfrac{2ct}{\xi}|&\leq 2ct^{1/2}\overset{t\to 0}{\longrightarrow} 0,
\end{align*}
hence
\begin{equation}\label{e:nonuniform1}
\lim_{t\to 0}\sup_{t^{1/2}<|\xi|,s\in\R}|\Phi_+(\xi,s,t)-\Phi_0(\xi,s)|=0.
\end{equation}
On the other hand, by splitting the integral $\int_{|\xi|<1}d\xi=\int_{|\xi|<t^{1/2}}d\xi+\int_{t^{1/2}<|\xi|<1}d\xi$ we can estimate
\begin{align*}
(1+|s|^{1+\alpha})|I_1(s,t)|\leq &C[\partial_sf]_{\alpha}^*(\sup_{\xi,s}|\Phi_+|+|\Phi_0|)\int_{|\xi|<t^{1/2}}|\xi|^{\alpha-1}\,d\xi+\\
&+ C[\partial_sf]_{\alpha}^*\bigl(\sup_{t^{1/2}<|\xi|,s}|\Phi_+-\Phi_0|\bigr)\int_{t^{1/2}<|\xi|<1}|\xi|^{\alpha-1}\,d\xi\\
\leq &C[\partial_sf]_{\alpha}^* \Bigl(t^{\alpha/2}+\sup_{t^{1/2}<|\xi|,s}|\Phi_+-\Phi_0|\Bigr).
\end{align*}
Hence
$$
\lim_{t\to 0}\sup_s(1+|s|^{1+\alpha})|I_1(s,t)|=0.
$$
Next, from \eqref{e:nonuniform1} we equally deduce
$$
\lim_{t\to 0}\sup_s(1+|s|^{1+\alpha})|I_3(s,t)|=0.
$$
Concerning $I_2$, note that for $|\xi|>1$
\begin{align*}
|\xi||\Phi_+-\Phi_0|&=|\xi|\left|\frac{Z_0^2-Z_t^2}{(1+Z_0^2)(1+Z_t^2)}+\frac{Z_0^2-(Z_t+\tfrac{2ct}{\xi})^2}{(1+Z_0^2)(1+(Z_t+\tfrac{2ct}{\xi})^2)}\right|\\
&\leq C\sup_{|\xi|>1,s}(|Z_0|+|Z_t|)\left(|\xi|\left|Z_0-Z_t\right|+|\xi|\left|Z_0-Z_t-\tfrac{2ct}{\xi}\right|\right)\\
&\leq C\|\partial_sz\|_{L^{\infty}}\left(|\xi|\left|Z_0-Z_t\right|+2ct\right).
\end{align*}
Since also
$$
\sup_{|\xi|>1,s\in\R}|\xi|\left|Z_0-Z_t\right|\leq 2\sup_{s\in\R}|z(s,t)-z_0(s)|\to 0\,\textrm{Êas }t\to 0,
$$
we deduce that $\lim_{t\to 0}\sup_{|\xi|>1,s\in R}|\overline{(\Phi_+-\Phi_0)}(\xi,s,t)|=0$ and consequently
$$
\lim_{t\to 0}\sup_s(1+|s|^{1+\alpha})|I_2(s,t)|=0.
$$
Finally, let us look at $I_4$, which requires bounding $\xi\partial_\xi(\Phi_+-\Phi_0)$. Observe that
$$
\xi\partial_\xi Z_t=\partial_sz(s-\xi,t)-\frac{z(s,t)-z(s-\xi,t)}{\xi},
$$
so that 
$$
\sup_{s,\xi}\left|\xi\partial_\xi Z_t\right|\leq 2\sup_{s}|\partial_sz(s,t)|\,\textrm{ and }\sup_{s,\xi}\left|\xi\partial_\xi(Z_t-Z_0)\right|\leq 2\sup_{s}|\partial_sz(s,t)-\partial_sz_0(s)|\,.
$$
It then follows by a simple calculation that $\lim_{t\to 0}\sup_{|\xi|>1,s\in \R}|\xi\partial_\xi(\Phi_+-\Phi_0)(\xi,s,t)|=0$ and hence
$$
\lim_{t\to 0}\sup_s(1+|s|^{1+\alpha})|I_4(s,t)|=0.
$$
This concludes the proof of the Lemma.
\end{proof}

Next, in order to evaluate $\left.\frac{\partial}{\partial t}\right\vert_{t=0}\frac{u_\nu^++u_\nu^-}{2}$, we 
first calculate
\begin{align*}
\frac{\Phi_++\Phi_-}{2}-\Phi_0&=\underbrace{\frac{Z_0^2-Z_t^2}{2(1+Z_0^2)}\left(\frac{2}{1+Z_t^2}+\frac{1}{1+(Z_t+\tfrac{2ct}{\xi})^2}+\frac{1}{1+(Z_t-\tfrac{2ct}{\xi})^2}\right)}_{\Delta_{reg}\Phi}\\
&+\underbrace{\frac{1}{1+Z_0^2}\left(\frac{-2ctZ_t\xi-2c^2t^2}{\xi^2+(Z_t\xi+2ct)^2}+\frac{2ctZ_t\xi-2c^2t^2}{\xi^2+(Z_t\xi-2ct)^2}\right)}_{\Delta_{sing}\Phi}\,.	
\end{align*}
Moreover, let
\begin{equation}\label{e:Phi1}
\Phi_1(\xi,s)=\left.\frac{\partial}{\partial t}\right\vert_{t=0}\frac{\Phi_++\Phi_-}{2}=\frac{-4Z_0Z_0'}{(1+Z_0^2)^2}\,.
\end{equation}
where $z_0'(s)=\partial_tz(s,0)$ and $Z_0'(\xi,s)=\frac{z_0'(s)-z_0'(s-\xi)}{\xi}$. For the regular part $\Delta_{reg}\Phi$ we have

\begin{lemma}\label{l:regular}
Assume $z=z(s,t)=\beta s+\bar{z}(s,t)$ with $\bar{z}\in C^1([0,T);C_*^{1,\alpha}(\R))$ for some $\beta\in\R$ and $\alpha\in(0,1)$. Then for any $f\in C^{1,\alpha}_*(\R)$
$$
\lim_{t\to 0}\sup_{s\in\R}(1+|s|^{1+\alpha})\left|\tfrac{1}{t}T_{\Delta_{reg}\Phi}f(s)-T_{\Phi_1}f(s)\right|=0.
$$
\end{lemma}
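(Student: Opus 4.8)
The plan is to reduce, using the linearity of the map $\Phi\mapsto T_\Phi$, to proving that $T_\Psi f\to 0$ in the weighted sup-norm appearing in the statement, where $\Psi:=\tfrac1t\Delta_{reg}\Phi-\Phi_1$. First I would check that $\Psi$ lies in $\mathcal W^0$ with $\vvvert\Psi\vvvert_0$ bounded uniformly for small $t$, and that $\Psi^\infty=0$ (both $\tfrac1t\Delta_{reg}\Phi$ and $\Phi_1$ vanish as $|\xi|\to\infty$, since $\bar z(\cdot,t)$ and $z_0'=\partial_t\bar z(\cdot,0)$ decay), so that the decomposition $T_\Psi f=I_1+I_2+I_3+I_4$ from the proof of Theorem~\ref{t:TPhi} applies with $\bar\Psi=\xi\Psi$ and $\tilde\Psi=\xi\partial_\xi\Psi-\Psi$.

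The core of the argument is a set of uniform convergence statements for the ingredients of $\tfrac1t\Delta_{reg}\Phi$. Writing $w(s,t):=\bar z(s,t)-\bar z(s,0)=\int_0^t\partial_\tau\bar z(s,\tau)\,d\tau$, the hypothesis $\bar z\in C^1([0,T);C^{1,\alpha}_*)$ gives that $\tfrac1t w(\cdot,t)\to z_0'$ in $C^{1,\alpha}_*$; in particular $\tfrac1t w(\cdot,t)\to z_0'$ and $\tfrac1t\partial_s w(\cdot,t)\to\partial_s z_0'$ in $L^\infty$, and $w(\cdot,t)\to 0$ in $C^1$. I would then factor $\tfrac1t\Delta_{reg}\Phi=A_tB_tC_t$ with $A_t=\tfrac1t(Z_0-Z_t)$, $B_t=\tfrac{Z_0+Z_t}{2(1+Z_0^2)}$, and $C_t=\tfrac{2}{1+Z_t^2}+\tfrac{1}{1+(Z_t+2ct/\xi)^2}+\tfrac{1}{1+(Z_t-2ct/\xi)^2}$. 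Using the integral representation $Z_t=\int_0^1\partial_s z(s-\tau\xi,t)\,d\tau$ one rewrites $A_t=-\int_0^1\tfrac1t\partial_s w(s-\tau\xi,t)\,d\tau$, so that $|A_t+Z_0'|\le\|\tfrac1t\partial_s w(\cdot,t)-\partial_s z_0'\|_{L^\infty}\to 0$ \emph{with no loss of powers of $\xi$}; likewise the identity $\xi\partial_\xi Z_t=\partial_s z(s-\xi,t)-Z_t$ gives uniform bounds and uniform convergence for $\xi\partial_\xi A_t$, $\xi A_t$ and the analogous quantities for $B_t$, all expressed through $\|\partial_s z(\cdot,t)-\partial_s z_0\|_{L^\infty}$ and $\|\tfrac1t\partial_s w(\cdot,t)-\partial_s z_0'\|_{L^\infty}$. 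The term $2ct/\xi$ in $C_t$ forces a restriction: on $\{|\xi|>t^{1/2}\}$ we have $|2ct/\xi|\le 2ct^{1/2}\to 0$, hence $C_t\to\tfrac{4}{1+Z_0^2}$, together with $\xi\partial_\xi C_t$, uniformly there. Since $A_0B_0C_0=\Phi_1$ and all factors (and $\Phi_1$) are uniformly bounded, this yields $\sup_{|\xi|>t^{1/2},\,s}|\Psi|\to 0$ and $\sup_{|\xi|>1,\,s}(|\bar\Psi|+|\tilde\Psi|)\to 0$ as $t\to 0$, while on $\{|\xi|<t^{1/2}\}$ only the uniform bound $|\Psi|\le C$ is used.

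With these facts in hand the four terms are estimated exactly as in the proofs of Theorem~\ref{t:TPhi} and Lemma~\ref{l:continuity}: in $I_1$ one splits $\int_{|\xi|<1}=\int_{|\xi|<t^{1/2}}+\int_{t^{1/2}<|\xi|<1}$, the first integral contributing $\le C[\partial_s f]_\alpha^*\,t^{\alpha/2}\,(1+|s|^{1+\alpha})^{-1}$ by the uniform bound on $\Psi$ and the second $\le C[\partial_s f]_\alpha^*\,\bigl(\sup_{|\xi|>t^{1/2},s}|\Psi|\bigr)(1+|s|^{1+\alpha})^{-1}$; the terms $I_2$, $I_3$, $I_4$ are then controlled by $\sup_{|\xi|>1,s}|\bar\Psi|$, $\sup_s|\Psi(\pm1,s,t)|$ and $\sup_{|\xi|>1,s}|\tilde\Psi|$ respectively, the last one after the usual splitting of the $\xi$-domain into the regions where $|s-\xi|\gtrsim|s|$. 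All four bounds tend to $0$ as $t\to 0$, which proves the Lemma.

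I expect the only real difficulty to be bookkeeping: organizing the factorization so that every error term is expressed through $\|\partial_s z(\cdot,t)-\partial_s z_0\|_{L^\infty}$ or $\|\tfrac1t\partial_s w(\cdot,t)-\partial_s z_0'\|_{L^\infty}$ (both $o(1)$ \emph{without} any compensating negative power of $\xi$), while confining all dependence on the singular factor $2ct/\xi$ to $C_t$ and to the region $\{|\xi|>t^{1/2}\}$, exactly via the threshold already exploited in the proof of Lemma~\ref{l:continuity}.
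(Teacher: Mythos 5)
Your proposal is correct and follows essentially the same route as the paper: both arguments reduce the statement to showing that the weight $\Psi=\tfrac1t\Delta_{reg}\Phi-\Phi_1$ (together with the associated far-field quantities $\bar\Psi,\tilde\Psi$) tends to zero uniformly for $|\xi|>t^{1/2}$ while staying uniformly bounded for $|\xi|<t^{1/2}$, and then reuse the $I_1,\dots,I_4$ decomposition and the $t^{1/2}$-threshold splitting from Lemma \ref{l:continuity}. The only difference is bookkeeping: the paper telescopes $\tfrac1t\Delta_{reg}\Phi-\Phi_1$ into two summands (one carrying the factor $\Phi_++\Phi_--2\Phi_0$ already analysed in Lemma \ref{l:continuity}, the other converging to zero uniformly in $s,\xi$), whereas you factor $\tfrac1t\Delta_{reg}\Phi=A_tB_tC_t$ with $\Phi_1=A_0B_0C_0$ and pass to the limit factor by factor -- and you are in fact somewhat more explicit than the paper about the $\bar\Psi$ and $\tilde\Psi$ estimates needed for the terms $I_2$ and $I_4$.
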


\begin{proof}
Observe that
$$
\frac{Z_t-Z_0}{t}-Z_0'=\int_0^1\frac{\partial_sz(s-\tau\xi,t)-\partial_sz_0(s-\tau\xi)}{t}-\partial_t\partial_sz_0(s-\tau\xi)\,d\tau,
$$
and by assumption 
$$
\sup_{s\in\R}\left|\frac{\partial_sz(s,t)-\partial_sz_0(s)}{t}-\partial_t\partial_sz_0(s)\right|\to 0\textrm{ as }t\to 0.
$$
Therefore
$$
\sup_{s,\xi}\left|\frac{Z_t-Z_0}{t}-Z_0'\right|\to 0\textrm{ as }t\to 0.
$$
Now write
\begin{align*}
\tfrac{1}{t}\Delta_{reg}\Phi-\Phi_1=\frac{Z_0^2-Z_t^2}{2t(1+Z_0^2)}\left(\Phi_++\Phi_--2\Phi_0\right)+\frac{2\Phi_0}{1+Z_0^2}\left(\frac{Z_0^2-Z_t^2}{2t}+Z_0Z_0'\right)\,.
\end{align*}
Since $\frac{Z_0^2-Z_t^2}{2t(1+Z_0^2)}$ is uniformly bounded in $s,\xi$ as $t\to 0$, the first summand can be dealt with exactly as in the proof of Lemma \ref{l:continuity}. On the other hand the second summand converges to zero uniformly in $s,\xi$ as $t\to 0$. This concludes the proof.
\end{proof}

Next we analyse the singular part $\Delta_{sing}\Phi$:

\begin{lemma}\label{l:singular}
Assume $z=z(s,t)=\beta s+\bar{z}(s,t)$ with $\bar{z}\in C^0([0,T);C_*^{1,\alpha}(\R))$ for some $\beta\in\R$ and $\alpha\in(0,1)$. Then, for any $f\in C^{2}(\R)$
\begin{equation}\label{e:TDeltasI1}
\lim_{t\to 0}\frac{1}{2\pi t}\int_{|\xi|<1}\frac{\partial_sf(s-\xi)-\partial_sf(s)}{\xi}\Delta_{sing}\Phi(\xi,s,t)\,d\xi=c\partial_s^2f(s)\sigma(s)
\end{equation}
for any $s\in \R$, 
where
$$
\sigma=\left.\frac{1-(\partial_sz)^2}{(1+(\partial_sz)^2)^2}\right|_{t=0}.
$$
Moreover, if in addition $f\in C^{2,\alpha}_*(\R)$, then 
\begin{equation}\label{e:TDeltas}
  \lim_{t\to 0}\sup_{s\in\R}(1+|s|^{1+\alpha})\left|\tfrac{1}{t}(T_{\Delta_{sing}\Phi}f)(s)-c\partial_s^2f(s)\sigma(s)\right|=0.
\end{equation}
\end{lemma}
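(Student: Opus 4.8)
The strategy is to isolate the singular behaviour of $\Delta_{sing}\Phi$ near $\xi=0$ and recognize it, after one Taylor expansion of $\partial_s f$, as a Poisson-type kernel converging to a delta measure. First I would rewrite $\Delta_{sing}\Phi$ in terms of the single quantity $Z_t=Z_t(\xi,s)$ from \eqref{e:Zt}. Combining the two summands over a common denominator, the numerator simplifies: the $\pm 2ctZ_t\xi$ terms partially cancel and one is left with an expression of the schematic form
$$
\Delta_{sing}\Phi=\frac{1}{1+Z_0^2}\cdot\frac{P(\xi,s,t)}{(\xi^2+(Z_t\xi+2ct)^2)(\xi^2+(Z_t\xi-2ct)^2)},
$$
where $P$ is a polynomial in $ct$, $Z_t\xi$ and $\xi$ which is $O(t)$. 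The key point is that the leading-order contribution, as $t\to 0$, comes from the region $|\xi|\lesssim t$, where $Z_t\approx \partial_s z(s,0)=:\zeta(s)$ is essentially constant, and the kernel behaves like a rescaled Poisson kernel in the variable $\xi/t$.

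Concretely, for \eqref{e:TDeltasI1} I would substitute $\partial_s f(s-\xi)-\partial_s f(s)=-\xi\,\partial_s^2 f(s)+o(\xi)$ (using $f\in C^2$), so that
$$
\frac{1}{2\pi t}\int_{|\xi|<1}\frac{\partial_s f(s-\xi)-\partial_s f(s)}{\xi}\Delta_{sing}\Phi\,d\xi
=-\frac{\partial_s^2 f(s)}{2\pi t}\int_{|\xi|<1}\Delta_{sing}\Phi\,d\xi+(\text{error}).
$$
The error term is controlled because $\Delta_{sing}\Phi$ is $O(t)$ pointwise and $L^1$-bounded in $\xi/t\mapsto\cdot$ after rescaling, so the factor $o(\xi)$ kills it. For the main term I would rescale $\xi=t\eta$; freezing $Z_t$ at $\zeta(s)$ and $Z_0$ at $\zeta(s)$ too (with an error going to zero since $\sup_{\xi,s}|Z_t-Z_0|\to 0$ and $\sup_{|\xi|<t^{1/2},s}|Z_t-\zeta|\to 0$), the integral reduces to an explicit elementary integral over $\eta\in\R$ of a rational function; evaluating it should produce exactly
$$
-\frac{1}{1+\zeta^2}\cdot\pi c\cdot\frac{1-\zeta^2}{1+\zeta^2}=-\pi c\,\sigma(s),
$$
up to an overall sign, giving the claimed limit $c\,\partial_s^2 f(s)\sigma(s)$. (I would double-check the sign and the constant $2\pi$ against the explicit residue computation.) One must also check that the tail $t^{1/2}<|\xi|<1$ contributes negligibly: there $|2ct/\xi|\le 2ct^{1/2}\to 0$, so $\Delta_{sing}\Phi=O(t\cdot|\xi|^{-1})$ there (from the $-2c^2t^2$ and $\pm 2ctZ_t\xi$ numerator terms), and $\frac1t\int_{t^{1/2}<|\xi|<1}\frac{|\xi|}{|\xi|}\cdot|\xi|^{-1}\,d\xi\cdot t\sim t|\log t|\to 0$ after inserting the Hölder bound on $\partial_s f$ — this needs a clean estimate but is routine.

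For the second, uniform-in-$s$ statement \eqref{e:TDeltas} with $f\in C^{2,\alpha}_*(\R)$, I would revisit the decomposition $T_{\Delta_{sing}\Phi}f=I_1+I_2+I_3+I_4$ as in the proof of Theorem \ref{t:TPhi}, and handle $I_2,I_3,I_4$ (the $|\xi|>1$ pieces) exactly as in Lemma \ref{l:continuity}: there $|2ct/\xi|$ is small and one checks $\frac1t\sup_{|\xi|>1,s}(|\overline{\Delta_{sing}\Phi}|+|\widetilde{\Delta_{sing}\Phi}|+\|\Delta_{sing}\Phi(\xi,\cdot)\|_0)\to$ finite limits contributing zero, so those terms are $o(1)$ in the weighted norm. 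The remaining term $I_1$ is treated by the rescaling argument above, now keeping track of the weight $(1+|s|^{1+\alpha})$: one splits $\int_{|\xi|<1}=\int_{|\xi|<t^{1/2}}+\int_{t^{1/2}<|\xi|<1}$; on the outer annulus $|\Delta_{sing}\Phi|\le Ct|\xi|^{-1}$ and the weighted Hölder seminorm $[\partial_s f]^*_\alpha$ of $f$ gives an $O(t^{\alpha/2}|\log t|)$ bound; on the inner piece one performs the $\xi=t\eta$ rescaling, uses $f\in C^{2,\alpha}_*$ to write $\partial_s f(s-\xi)-\partial_s f(s)+\xi\partial_s^2 f(s)=O(|\xi|^{1+\alpha})$ uniformly with the decay weight, and freezes $Z_t,Z_0$ at $\zeta(s)$ with error controlled by $\sup_{\xi,s}|Z_t-Z_0|$ plus $\sup_{|\xi|<t^{1/2}}|Z_t-\zeta|$. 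The main obstacle is bookkeeping: simplifying $\Delta_{sing}\Phi$ to a manifestly $O(t)$ rational kernel, and then carrying the weight $(1+|s|^{1+\alpha})$ and the Hölder remainders consistently through the rescaling while justifying that freezing the ``slow'' variables $Z_t,Z_0,\sigma$ at their $\xi=0,t=0$ values is legitimate. The actual residue integral $\int_\R \frac{\text{rational}(\eta)}{(1+(\zeta\eta+2c)^2+\ldots)}d\eta$ is elementary but must be computed carefully to land on $\sigma=\frac{1-\zeta^2}{(1+\zeta^2)^2}$ with the right constant.
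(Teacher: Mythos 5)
Your strategy coincides with the paper's: rescale $\xi=t\eta$ so that $\Delta_{sing}\Phi(t\eta,s,t)$ becomes a rational kernel $\Psi_t(\eta,s)$ of unit size with $|\Psi_t|\le C(1+\eta^2)^{-1}$, pass to the limit $\Psi_t\to\Psi_0$ (freezing $Z_t,Z_0$ at $\partial_sz_0(s)$) against the difference quotient of $\partial_sf$, and evaluate $\int_\R\Psi_0\,d\eta$ by an elementary partial-fraction computation to land on $-2\pi c\,\sigma(s)$; for the uniform statement both you and the paper return to the decomposition $I_1+\dots+I_4$ of Theorem \ref{t:TPhi}, dispose of the $|\xi|>1$ pieces via the uniform boundedness of $\tfrac1t\Delta_{sing}\Phi$ and $\tfrac1t\xi\partial_\xi\Delta_{sing}\Phi$ there, and treat $I_1$ by the rescaling argument with the weight carried along.

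One step of your sketch would fail as written: the tail estimate over $t^{1/2}<|\xi|<1$. With the term-by-term bound $|\Delta_{sing}\Phi|\lesssim t/|\xi|$ and the Lipschitz bound on the difference quotient, the contribution is $\tfrac1t\int_{t^{1/2}<|\xi|<1}C\,\tfrac{t}{|\xi|}\,d\xi\sim|\log t|$, not $t|\log t|$ --- you have kept an extra factor of $t$, and your own bound does not tend to zero. To make the annulus negligible you must actually use the cancellation between the two summands of $\Delta_{sing}\Phi$ that you allude to: combining them over a common denominator, the numerator equals $-4c^2t^2\bigl(\xi^2(1-3Z_t^2)+4c^2t^2\bigr)$, so $|\Delta_{sing}\Phi|\lesssim t^2/\xi^2$ on $|\xi|>t^{1/2}$ and the tail is $O(t^{1/2})$. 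Alternatively --- and this is how the paper proceeds --- one avoids the splitting altogether: after the substitution $\xi\mapsto t\xi$ the entire integral over $|\xi|<1/t$ is dominated by $C(1+|\xi|^2)^{-1}$ times a bounded difference quotient, and dominated convergence gives \eqref{e:TDeltasI1} in one stroke; the weighted statement \eqref{e:TDeltas} then follows from a single $\eps$--$R$--$\delta$ argument in which $f\in C^{2,\alpha}_*$ makes the difference quotient uniformly close to $-\partial_s^2f$ in the weighted norm. With that correction your outline reproduces the paper's proof.
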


\begin{proof}
We begin by performing the change of variables $\xi\mapsto \frac{\xi}{t}$ in the integral:
\begin{align*}
	\frac{1}{2\pi t}&\int_{|\xi|<1}\frac{\partial_sf(s-\xi)-\partial_sf(s)}{\xi}\Delta_{sing}\Phi(\xi,s,t)\,d\xi=\\
	&=\frac{1}{2\pi}\int_{|\xi|<\tfrac{1}{t}}\frac{\partial_sf(s-t\xi)-\partial_sf(s)}{t\xi}\Psi_t(\xi,s)\,d\xi,
\end{align*}
where
\begin{align*}
\Psi_t(\xi,s)=\Delta_{sing}\Phi(t\xi,s,t)=\frac{-4c^2(4c^2+(1-3Z_t^2)\xi^2)}{(1+Z_0^2)(\xi^2+(Z_t\xi+2c)^2)(\xi^2+(Z_t\xi-2c)^2)}.
\end{align*}
Since $\sup_{\xi,s,t}|Z_t|\leq\|\partial_sz\|_{L^{\infty}}$ and 
\begin{align*}
\xi^2+(Z_t\xi\pm 2c)^2&=\frac{4c^2}{1+Z_t^2}+(1+Z_t^2)\left(\xi\pm \frac{2Z_tc}{1+Z_t^2}\right)^2\\
&\geq \frac{4c^2}{1+\|\partial_sz\|^2_{L^\infty}},
\end{align*}
it follows that
\begin{equation}\label{e:DsPhi}
|\Psi_t(\xi,s)|\leq \frac{C}{1+|\xi|^2},
\end{equation}
where the constant depends only on $\|\partial_sz\|_{L^{\infty}}$ and on $c>0$. Furthermore, since
\begin{align*}
\lim_{t\to 0}Z_0(t\xi,s)=\lim_{t\to 0}Z_t(t\xi,s)\to \partial_sz_0(s)\textrm{ for all }\xi,s\in \R,	
\end{align*}
we deduce that for any $\xi,s\in\R$
\begin{align*}
\Psi_0(\xi,s)&=\lim_{t\to 0}\Psi_t(\xi,s)\\
&=\frac{1}{1+(\partial_sz_0)^2}\left(\frac{-2c\xi\partial_sz_0-2c^2}{\xi^2+(\xi\partial_sz_0+2c)^2}+\frac{2c\xi\partial_sz_0-2c^2}{\xi^2+(\xi\partial_sz_0-2c)^2}\right),
\end{align*}
where we write $z_0=z|_{t=0}$.
Finally, since $f\in C^{2}(\R)$, we have
$$
\sup_{|t\xi|<1}\left|\frac{\partial_sf(s-t\xi)-\partial_sf(s)}{t\xi}\right|\leq \sup_{|s-s'|<1}|\partial_s^2f(s')|,  
$$
so that, 
from the Lebesgue dominated convergence theorem we deduce that
\begin{align*}
\lim_{t\to 0}\frac{1}{2\pi}\int_{|\xi|<\tfrac{1}{t}}\frac{\partial_sf(s-t\xi)-\partial_sf(s)}{t\xi}\Psi_t(\xi,s)\,d\xi=-\frac{\partial_s^2f(s)}{2\pi}\int_{\R}\Psi_0(\xi,s)\,d\xi.
\end{align*}
Noting that the bound \eqref{e:DsPhi} applies also to $\Psi_0$ and $\partial_sz_0$ is independent of $\xi$, we may evaluate the integral $\int_{\R}\Psi_0\,d\xi$ using elementary methods. Indeed, we calculate for any constants $a\in\R$ and $c>0$ 
\begin{align*}
\mathcal{I}_{a,c}=&\frac{1}{2\pi}\int_{\R}\frac{-2ac\xi-2c^2}{(1+a^2)\xi^2+4ac\xi+4c^2}+\frac{2ac\xi-2c^2}{(1+a^2)\xi^2-4ac\xi+4c^2}\,d\xi\\
=\frac{1}{2\pi}&\int_{\R}\frac{ac}{1+a^2}\left(\frac{2(1+a^2)\xi-4ac}{(1+a^2)\xi^2-4ac\xi+4c^2}-\frac{2(1+a^2)\xi+4ac}{(1+a^2)\xi^2+4ac\xi+4c^2}\right)\,d\xi\\
&+\frac{1}{2\pi}\frac{2c^2(a^2-1)}{1+a^2}\int_{\R}\frac{1}{(1+a^2)\xi^2+4ac\xi+4c^2}+\frac{1}{(1+a^2)\xi^2-4ac\xi+4c^2}\,d\xi\\
=\frac{1}{2\pi}&\frac{ac}{1+a^2}\lim_{R\to \infty}\left[\log\frac{(1+a^2)\xi^2-4ac\xi+4c^2}{(1+a^2)\xi^2+4ac\xi+4c^2}\right]^R_{-R}\\
&+\frac{1}{2\pi}\frac{2c^2(a^2-1)}{(1+a^2)^2}\int_{\R}\frac{2}{\xi^2+\frac{4c^2}{(1+a^2)^2}}\,d\xi\\
=\frac{1}{2\pi}&\frac{2c(a^2-1)}{1+a^2}\int_{\R}\frac{1}{1+\xi^2}\,d\xi=-c\frac{1-a^2}{1+a^2}.
\end{align*}
Therefore
$$
\frac{1}{2\pi}\int_{\R}\Psi_0(\xi,s)\,d\xi=-c\frac{1-\partial_sz_0(s)^2}{(1+(\partial_sz_0(s))^2)^2},
$$
hence the proof of \eqref{e:TDeltasI1} follows.

In order to show \eqref{e:TDeltas} we again start with the decomposition
$$
\tfrac{1}{t}T_{\Delta_{sing}\Phi}f=I_1+I_2+I_3+I_4
$$
as in the proof of Theorem \ref{t:TPhi}. We claim that 
\begin{equation}\label{e:SingClaim1}
\sup_s(1+|s|^{1+\alpha})|I_1(s)-c\partial_s^2f(s)\sigma(s)|\to 0\,\textrm{ as }t\to 0,
\end{equation}
and for $k=2,3,4$
\begin{equation}\label{e:SingClaim2}
\sup_s(1+|s|^{1+\alpha})|I_k(s)|\to 0\,\textrm{ as }t\to 0.
\end{equation}
The proof of \eqref{e:SingClaim2} follows from the observation that $\tfrac{1}{t}\Delta_{sing}\Phi$ and $\xi\partial_{\xi}\tfrac{1}{t}\Delta_{sing}\Phi$ are bounded uniformly in $s\in\R$, $|\xi|\geq 1$.
The claim \eqref{e:SingClaim1} is equivalent to showing $\lim_{t\to 0}J_t=0$, where
\begin{align*}
J_t=\sup_s(1+|s|^{1+\alpha})\left|\int_{|\xi|<\tfrac{1}{t}}\frac{\partial_sf(s-t\xi)-\partial_sf(s)}{t\xi}\Psi_t\,d\xi+\int_{\R}\partial_s^2f(s)\Psi_0\,d\xi\right|.
\end{align*}
Let $\eps>0$ and fix $R>1$ so that $\int_{|\xi|>R}|\Psi_t|\,d\xi<\eps$ for all $t\geq 0$ (by the bound \eqref{e:DsPhi} this is possible). Moreover, fix $0<\delta<1$ so that
\begin{align*}
\sup_{s\in\R}(1+|s|^{1+\alpha})\left|\frac{\partial_sf(s)-\partial_sf(s-\eta)}{\eta}-\partial_s^2f(s)\right| <\eps
\end{align*}
for all $0<|\eta|<\delta$. This is possible if $f\in C^{2,\alpha}_*(\R)$ since we can write
\begin{align*}
\frac{\partial_sf(s)-\partial_sf(s-\eta)}{\eta}-\partial_s^2f(s)=\int_0^1[\partial_s^2f(s-\tau\eta)-\partial_s^2f(s)]\,d\tau.
\end{align*}
Then, for any $t>0$ with $tR<\delta$ we have
\begin{align*}
J_t&\leq \sup_s(1+|s|^{1+\alpha})\left|\int_{|\xi|<R}\frac{\partial_sf(s-t\xi)-\partial_sf(s)}{t\xi}\Psi_t+\partial_s^2f(s)\Psi_0\,d\xi\right|\\
&+\sup_s(1+|s|^{1+\alpha})\int_{|\xi|>R}\left|\frac{\partial_sf(s-t\xi)-\partial_sf(s)}{t\xi}\Psi_t\right|+\left|\partial_s^2f(s)\Psi_0\right|\,d\xi\\
&\leq \sup_s(1+|s|^{1+\alpha})\int_{|\xi|<R}\left|\frac{\partial_sf(s-t\xi)-\partial_sf(s)}{t\xi}+\partial_s^2f(s)\right||\Psi_t|\,d\xi\\
&+\|f\|_{2,\alpha}^*\int_{|\xi|<R}|\Psi_t-\Psi_0|\,d\xi+\|f\|_{2,\alpha}^*\int_{|\xi|>R}|\Psi_t|+|\Psi_0|\,d\xi\\
&\leq C(1+\|f\|_{2,\alpha}^*)\eps+\|f\|_{2,\alpha}^*\int_{|\xi|<R}|\Psi_t-\Psi_0|\,d\xi.
\end{align*}
Using once more the bound \eqref{e:DsPhi} we deduce that $\limsup_{t\to 0}J_t\leq C(1+\|f\|_{2,\alpha}^*)\eps$. Since $\eps>0$ was arbitrary, it follows that $\lim_{t\to 0}J_t=0$, concluding the proof of \eqref{e:SingClaim1} and thence the proof of \eqref{e:TDeltas}.
\end{proof}

\bigskip

Using Lemmas \ref{l:regular} and \ref{l:singular} we can calculate
\begin{align*}
\left.\frac{\partial}{\partial t}\right\vert_{t=0}\frac{u_\nu^++u_\nu^-}{2}&=\lim_{t\to 0}\frac{1}{t}\left(\frac{u_\nu^++u_\nu^-}{2}-u^0_\nu\right)\\
&=\lim_{t\to 0}\frac{1}{t}T_{\left(\tfrac{\Phi_++\Phi_-}{2}-\Phi_0\right)}z_0+T_{\left(\tfrac{\Phi_++\Phi_-}{2}\right)}\tfrac{z-z_0}{t}\\
&=T_{\Phi_1}\bar{z}_0+T_{\Phi_0}z_0'+c\partial_s^2z_0\sigma(s),
\end{align*}
where $\sigma$ is the function defined in Lemma \ref{l:singular}. 
This motivates our choice of $z(s,t)$: 

\begin{theorem}\label{t:z}
Assume that $z_0(s)=\beta s+\bar{z}_0(s)$ with $\bar{z}_0\in C^{3,\alpha}_*(\R)$ for some $0<\alpha<1$ and $\beta\in\R$. 	
Let
\begin{equation*}
	z(s,t)=z_0(s)+tz_1(s)+\tfrac{1}{2}t^2z_2(s),
\end{equation*}
where
\begin{align*}
	z_1&:=T_{\Phi_0}\bar{z}_0,\\
	z_2&:=T_{\Phi_0}z_1+T_{\Phi_1}\bar{z}_0+c\sigma\partial_s^2z_0,
\end{align*}
with $\sigma=\frac{1-(\partial_sz_0)^2}{(1+(\partial_sz_0)^2)^2}$.
Then $z(s,t)=\beta s+\bar{z}(s,t)$ with $\bar{z}\in C^2([0,\infty);C_*^{1,\alpha}(\R))$ and with this choice of $z$ the conditions \eqref{z1}-\eqref{z2} of Theorem \ref{Subs} are satisfied. 
\end{theorem}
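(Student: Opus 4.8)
\emph{Plan.} The plan is to first verify the claimed regularity of $z$, and then to check the two limits \eqref{z1}--\eqref{z2} by expanding $u_\nu^\pm(\cdot,t)$ to first order in $t$: by construction the zeroth-order term should be exactly $z_1$, and the first-order coefficient exactly $z_2$. The tools are Lemmas \ref{l:continuity}, \ref{l:regular}, \ref{l:singular} for the expansion of the weight $\Phi_\pm$ around $t=0$, together with Theorem \ref{t:TPhi} and Lemma \ref{l:Tpm} to estimate the remainders.

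\emph{Regularity.} Since $\partial_sz_0=\beta+\partial_s\bar{z}_0$ with $\bar{z}_0\in C^{3,\alpha}_*(\R)$, I would run an estimate in the spirit of Lemma \ref{l:Tpm}, but now tracking $s$-derivatives up to second order of $1/(1+Z_0^2)$ together with the far-field quantities $\overline{\Phi_0}$, $\tilde\Phi_0$ and their H\"older seminorms, to obtain $\Phi_0\in\mathcal{W}^{2,\alpha}$ (using $\partial_s^jZ_0(\xi,s)=\int_0^1\partial_s^{j+1}\bar{z}_0(s-\tau\xi)\,d\tau$ for $j\leq 2$). By \eqref{e:TPhi2} this gives $z_1=T_{\Phi_0}\bar{z}_0\in C^{2,\alpha}_*(\R)$, hence also $T_{\Phi_0}z_1\in C^{1,\alpha}_*(\R)$. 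Knowing $z_1\in C^{2,\alpha}_*(\R)$, the same type of computation shows $\Phi_1=-4Z_0Z_0'/(1+Z_0^2)^2\in\mathcal{W}^{1,\alpha}$ (with $Z_0'(\xi,s)=\frac{z_1(s)-z_1(s-\xi)}{\xi}$), so $T_{\Phi_1}\bar{z}_0\in C^{1,\alpha}_*(\R)$; and $\sigma$ is a bounded $C^{2,\alpha}$-function of $\partial_sz_0$ while $\partial_s^2z_0=\partial_s^2\bar{z}_0\in C^{1,\alpha}_*(\R)$, so $c\sigma\partial_s^2z_0\in C^{1,\alpha}_*(\R)$. Thus $z_2\in C^{1,\alpha}_*(\R)$, and $\bar{z}(s,t)=\bar{z}_0+tz_1+\tfrac12t^2z_2$ is a polynomial in $t$ with coefficients in $C^{1,\alpha}_*(\R)$, whence $\bar{z}\in C^\infty([0,\infty);C^{1,\alpha}_*(\R))\subset C^2([0,\infty);C^{1,\alpha}_*(\R))$.

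\emph{Checking \eqref{z1} and \eqref{z2}.} Treating the unstable case (the sign of $\rho^\pm$ is irrelevant), $u_\nu^\pm=T_{\Phi_\pm}z$, and since $\partial_s(\beta s)$ is constant one has $T_{\Phi_\pm}(\beta s)=0$, so $u_\nu^\pm(\cdot,t)=T_{\Phi_\pm(\cdot,t)}\bar{z}_0+tT_{\Phi_\pm(\cdot,t)}z_1+\tfrac12t^2T_{\Phi_\pm(\cdot,t)}z_2$, while $\partial_tz(\cdot,t)=z_1+tz_2$. For \eqref{z1}: $T_{\Phi_\pm(\cdot,t)}\bar{z}_0\to T_{\Phi_0}\bar{z}_0=z_1$ in $L^\infty$ by Lemma \ref{l:continuity}, while $\|tT_{\Phi_\pm}z_1\|_{L^\infty}+\|t^2T_{\Phi_\pm}z_2\|_{L^\infty}\to0$ by \eqref{e:TPhi1} and $\sup_{t\in[0,T]}\vvvert\Phi_\pm(\cdot,t)\vvvert_0<\infty$ (Lemma \ref{l:Tpm}); hence $u_\nu^\pm(\cdot,t)\to z_1$ and $\partial_tz(\cdot,t)\to z_1$ in $L^\infty$. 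For \eqref{z2}: write $u_\nu^++u_\nu^-=2T_{(\Phi_++\Phi_-)/2}\bar{z}$, and use $\tfrac{\Phi_++\Phi_-}{2}-\Phi_0=\Delta_{reg}\Phi+\Delta_{sing}\Phi$ and linearity of $\Phi\mapsto T_\Phi$ to get
\begin{align*}
\tfrac12(u_\nu^++u_\nu^-)(\cdot,t)&=T_{\Phi_0}\bar{z}_0+T_{\Delta_{reg}\Phi}\bar{z}_0+T_{\Delta_{sing}\Phi}\bar{z}_0\\
&\quad+t\,T_{\Phi_0}z_1+t\,T_{(\Phi_++\Phi_-)/2-\Phi_0}z_1+\tfrac12t^2T_{(\Phi_++\Phi_-)/2}z_2.
\end{align*}
By Lemma \ref{l:regular}, $\tfrac1tT_{\Delta_{reg}\Phi}\bar{z}_0\to T_{\Phi_1}\bar{z}_0$; by Lemma \ref{l:singular} (applicable since $\bar{z}_0\in C^{3,\alpha}_*\subset C^{2,\alpha}_*$), $\tfrac1tT_{\Delta_{sing}\Phi}\bar{z}_0\to c\sigma\partial_s^2z_0$; by Lemma \ref{l:continuity}, $T_{(\Phi_++\Phi_-)/2-\Phi_0}z_1\to0$; and $\tfrac12t^2T_{(\Phi_++\Phi_-)/2}z_2=o(t)$ by \eqref{e:TPhi1} and Lemma \ref{l:Tpm} --- all in the weighted sup norm $\sup_s(1+|s|^{1+\alpha})|\cdot|$. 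Since $T_{\Phi_0}\bar{z}_0=z_1$ and $T_{\Phi_0}z_1+T_{\Phi_1}\bar{z}_0+c\sigma\partial_s^2z_0=z_2$, this gives $\tfrac12(u_\nu^++u_\nu^-)(\cdot,t)=z_1+tz_2+R(\cdot,t)$ with $\sup_s(1+|s|^{1+\alpha})|R(\cdot,t)|=o(t)$; as $\partial_tz=z_1+tz_2$ and $\int_\R(1+|s|^{1+\alpha})^{-1}\,ds<\infty$, we conclude $\tfrac1t\|2\partial_tz-u_\nu^+-u_\nu^-\|_{L^1}\leq C\tfrac1t\sup_s(1+|s|^{1+\alpha})|R(\cdot,t)|\to0$.

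\emph{Main obstacle.} The only laborious step is the regularity claim: verifying $\Phi_0\in\mathcal{W}^{2,\alpha}$ and $\Phi_1\in\mathcal{W}^{1,\alpha}$, i.e. finiteness of the $s$-derivatives (up to the order needed, including the far-field pieces $\overline\Phi$, $\tilde\Phi$ and their H\"older seminorms) of $1/(1+Z_0^2)$ and of $\Phi_1$ --- this is exactly the computation of Lemma \ref{l:Tpm} carried out with more derivatives, and it is where the hypothesis $\bar{z}_0\in C^{3,\alpha}_*(\R)$ is used. A minor but essential subtlety is that Lemmas \ref{l:continuity}--\ref{l:singular} yield convergence only in the weighted sup norm: this is directly enough for the $L^\infty$-statement \eqref{z1}, but for the $L^1$-statement \eqref{z2} one must pass from the weighted sup norm to $L^1$ using integrability of $(1+|s|^{1+\alpha})^{-1}$, which is precisely where $\alpha>0$ enters.
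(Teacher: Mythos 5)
Your proposal is correct and follows essentially the same route as the paper's proof: Step 1 (showing $\Phi_0\in\mathcal{W}^{2,\alpha}$, $\Phi_1\in\mathcal{W}^{1,\alpha}$ via the computations of Lemma \ref{l:Tpm} with extra derivatives, then invoking \eqref{e:TPhi2}), Step 2 (the expansion of $u_\nu^\pm-\partial_tz$ with Lemmas \ref{l:Tpm} and \ref{l:continuity}), and Step 3 (the identical decomposition into $\Delta_{reg}\Phi$, $\Delta_{sing}\Phi$ and remainder terms handled by Lemmas \ref{l:regular}, \ref{l:singular}, \ref{l:continuity}). Your explicit remark that the weighted sup-norm convergence must be converted to $L^1$ via integrability of $(1+|s|^{1+\alpha})^{-1}$ is a point the paper leaves implicit, and is correctly handled.
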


\begin{proof}\hfill

\subsubsection*{Step 1: Estimating $z_1$ and $z_2$}

We begin by showing that $z_1\in C_*^{2,\alpha}(\R)$ and $z_2\in C_*^{1,\alpha}(\R)$. First of all, since $\partial_sz_0\in C^2(\R)$ with $\partial_s^2z_0\in C^{1,\alpha}_*(\R)$, it follows that $\sigma\in C^2(\R)$ with $\sigma,\partial_s\sigma,\partial_s^2\sigma\in L^{\infty}(\R)$ and consequently $c\sigma\partial_s^2z_0\in C^{1,\alpha}_*(\R)$. Therefore, according to estimate \eqref{e:TPhi2} in Theorem \ref{t:TPhi} it suffices to show that $\Phi_0\in \mathcal{W}^{2,\alpha}$ and $\Phi_1\in \mathcal{W}^{1,\alpha}$, where $\Phi_0$ and $\Phi_1$ are defined in \eqref{e:Phi0} and \eqref{e:Phi1} above. 

\bigskip

The proof that $\Phi_0\in\mathcal{W}^0$ follows entirely along the lines of the proof of Lemma \ref{l:Tpm}: Since $\lim_{|\xi|\to\infty}Z_0=\beta$ we see that $\Phi_0^\infty=\frac{2}{1+\beta^2}$. Note also
that
$$
\sup_{\xi,s}\left|\xi(Z_0-\beta)\right|=\sup_{\xi,s}\left|\bar{z}_0(s)-\bar{z}_0(s-\xi)\right|\leq 2\|\bar{z}_0\|_0.
$$
Therefore 
$$
\bar{\Phi}_0=2\xi\frac{\beta^2-Z_0^2}{(1+Z_0^2)(1+\beta^2)}=2\frac{\beta+Z_0}{(1+Z_0^2)(1+\beta^2)}(\xi(Z_0-\beta))
$$
is bounded uniformly in $\xi,s$. Similarly, we calculate
$$
\xi\partial_\xi\Phi_0=\frac{-4Z_0}{(1+Z_0^2)^2}\xi\partial_{\xi}Z_0
$$
and
$$
\xi\partial_{\xi}Z_0=\partial_sz_0(s-\xi)-\frac{z_0(s)-z_0(s-\xi)}{\xi}=\partial_s\bar{z}_0(s-\xi)-\frac{\bar{z}_0(s)-\bar{z}_0(s-\xi)}{\xi}.
$$
We deduce that also $\tilde\Phi_0$ is uniformly bounded. This shows that $\Phi_0\in\mathcal{W}^0$. 

Next, we calculate:
$$
\partial_s\Phi_0=\frac{-4Z_0\partial_sZ_0}{(1+Z_0^2)^2},\quad \partial_s^2\Phi_0=\frac{16Z_0^2(\partial_sZ_0)^2}{(1+Z_0^2)^3}-\frac{4(\partial_sZ_0)^2+4Z_0\partial_s^2Z_0}{(1+Z_0^2)^2}\,.
$$
Note that, since $\partial_sz_0\in C^{2,\alpha}(\R)$, the function $Z_0(\xi,s)$ satisfies $Z_0\in C^{2,\alpha}(\R^2)$ and this implies $\Phi_0\in C^{2,\alpha}(\R^2)$. Furthermore, proceeding as above, 
it follows easily that $\xi\partial_s^kZ_0$ is bounded uniformly for $s\in\R, |\xi|>1$ for $k=1,2$, and hence the same is true for $\xi\partial_s^k\Phi_0$. Analogously, $\xi\partial_\xi\partial_s^kZ_0$ is bounded uniformly for $s\in\R, |\xi|>1$ for $k=0,1,2$, hence the same is true for $\xi\partial_\xi\partial_s^k\Phi_0$. This implies that $\partial_s\Phi_0,\partial_s^2\Phi_0\in\mathcal{W}^0$.

In the same manner we deduce that for $k=1,2$ 
\begin{align*}
|\xi||\partial_s^kZ_0(\xi,s_1)-\partial_s^kZ_0(\xi,s_2)|\leq & |\partial_s^kz_0(s_1)-\partial_s^kz_0(s_2)|\\
&+|\partial_s^kz_0(s_1-\xi)-\partial_s^kz_0(s_2-\xi)|\\
\leq &2[\partial_s^kz_0]_{\alpha}|s_1-s_2|^{\alpha}
\end{align*}
and similarly, for $|\xi|>1$ and $k=1,2$
$$
|\xi||\partial_\xi\partial_s^kZ_0(\xi,s_1)-\partial_\xi\partial_s^kZ_0(\xi,s_2)|\leq  ([\partial_s^{k+1}z_0]_{\alpha}+2[\partial_s^kz_0]_{\alpha})|s_1-s_2|^{\alpha}
$$
This shows that $\Phi_0\in \mathcal{W}^{2,\alpha}$. 

We next proceed with $\Phi_1$. First of all, from Theorem \ref{t:TPhi} and the above we deduce that $z_1\in C_*^{2,\alpha}$. 
Comparing the expressions for $\Phi_1$ and $\partial_s\Phi_0$ we see that the estimates for $\Phi_1$ may be obtained exactly as above, by replacing $\partial_sz_0$ by $z_1$ where appropriate and noting that both functions are in $C^{2,\alpha}$. In this way we deduce that $\Phi_1\in \mathcal{W}^{1,\alpha}$.

\subsubsection*{Step 2: The estimate \eqref{z1}}

Using \eqref{e:u_nu}-\eqref{e:TPhi} we have
$$
u_\nu^\pm-\partial_tz=\left(T_{\Phi_\pm}\bar{z}_0+tT_{\Phi_\pm}z_1+\tfrac12t^2T_{\Phi_\pm}z_2\right)-(z_1+tz_2),
$$
whereas, by our choice of $z_1$
\begin{align*}
T_{\Phi_\pm}\bar{z}_0-z_1&=T_{\Phi_\pm}\bar{z}_0-T_{\Phi_0}\bar{z}_0.
\end{align*}
Since $\bar{z}_0,z_1,z_2\in C^{1,\alpha}_*(\R)$ by Step 1, Lemmas \ref{l:Tpm} and \ref{l:continuity} imply that 
$$
\lim_{t\to 0}\sup_s(1+|s|^{1+\alpha})|u_\nu^\pm(s,t)-\partial_tz(s,t)|=0.
$$
In particular \eqref{z1} follows.

\subsubsection*{Step 3: The estimate \eqref{z2}}
As in Step 2, using \eqref{e:u_nu}-\eqref{e:TPhi}, our choice of $z_1,z_2$ and the linearity of $\Phi\mapsto T_\Phi$, we have
\begin{align*}
\frac{1}{t}\left(\frac{u_\nu^++u_\nu^-}{2}-\partial_tz\right)&=\frac{1}{t}\left(T_{\tfrac{\Phi_++\Phi_-}{2}}\bar{z}_0-z_1\right)+T_{\tfrac{\Phi_++\Phi_-}{2}}z_1-z_2+\tfrac{t}{2}T_{\tfrac{\Phi_++\Phi_-}{2}}z_2\\
=&\left(\tfrac{1}{t}T_{\Delta_{reg}\Phi}\bar{z}_0-T_{\Phi_1}\bar{z}_0\right)+\left(\tfrac{1}{t}T_{\Delta_{sing}\Phi}\bar{z}_0-c\partial_s^2\bar{z}_0\sigma(s)\right)\\
&+\left(T_{\tfrac{\Phi_++\Phi_-}{2}}z_1-T_{\Phi_0}z_1\right)+\left(\tfrac{t}{2}T_{\tfrac{\Phi_++\Phi_-}{2}}z_2\right).
\end{align*}
Since $\bar{z}_0\in C^{2,\alpha}_*(\R)$ and $z_1,z_2\in C^{1,\alpha}_*(\R)$, Lemmas \ref{l:Tpm}, \ref{l:continuity}, \ref{l:regular} and \ref{l:singular} are applicable, and 
we conclude that 
$$
\lim_{t\to 0}\sup_s(1+|s|^{1+\alpha})\left|\frac{1}{t}\left(\frac{u_\nu^+(s,t)+u_\nu^-(s,t)}{2}-\partial_tz(s,t)\right)\right|=0.
$$
In particular \eqref{z2} follows.

\end{proof}


\section{Symmetric piecewise constant densities}\label{s:general}

The subsolution (and corresponding admissible weak solutions) constructed in the previous sections have a mixing zone with a maximal expansion rate of 
$c_{max}=1$ in the unstable case, whereas the maximal expansion rate reachable with the construction in \cite{ccf:ipm} is $c_{max}=2$. In this section we show that
with a more general piecewise constant density the method of this paper is applicable to reach the expansion rate $c_{max}=2$ as well -- indeed, this is easily achieved by approximating the linear density function from \cite{ccf:ipm} by a piecewise constant density. From now on we restrict attention to the unstable case, with $\rho^+=1$ and $\rho^-=-1$.

Let $N \in \N$ and define $2N$ interfaces
\begin{equation}\label{e:Gammas}
\Gamma^{\pm i}= \{x \in \R^2| x_2 = z(x_1,t) \pm c_i t\},\qquad  i = 1,\ldots,N,
\end{equation}
where $0<c_1<c_2<\ldots < c_N$ are arbitrary velocities and $z(s,t)$ is the parametrization of a curve for $t\in [0,T]$. 
The open regions between neighbouring interfaces are defined as
\begin{equation}\label{e:densn}
\begin{split}
\Omega^0(t) &= \{x \in \R^2:\, -c_1t < x_2-z(x_1,t) < c_1t\}\,,\\
\Omega^{N}(t) &= \{x \in \R^2 :\, x_2>z(x_1,t) + c_N t\},\\
\Omega^{-N}(t) &= \{x \in \R^2 :\, x_2<z(x_1,t) - c_N t\},
\end{split}
\end{equation}
and for $i=1,\dots,N-1$
\begin{equation}\label{e:densni}
\begin{split}
\Omega^{i}(t) &= \{x \in \R^2:\, z(x_1,t)+c_{i}t < x_2 < z(x_1,t)+c_{i+1}t\},\\
\Omega^{-i}(t) &= \{x \in \R^2:\, z(x_1,t)-c_{i+1}t < x_2 < z(x_1,t)-c_{i}t\}.
\end{split}	
\end{equation}
Analogously to \eqref{e:dens} we set
\begin{equation}\label{rhon}
\rho(x,t) = \frac{i}{N} \qquad\textrm{ for } x \in \Omega^{i}(t),\quad i=-N,\dots,N,
\end{equation}
so that we have a constant density jump of $\frac{1}{N}$ across each boundary $\Gamma^{\pm i}$. The mixing zone is then 
$$
\Omega_{mix} = \bigcup\limits_{i=-(N-1)}^{N-1} \Omega^i=\{x \in \R^2| -c_Nt < x_2-z(x_1,t) < c_Nt\}.
$$ 

With the density defined in this way, the velocity $u$ can be obtained as in Section \ref{s:u}, in particular we have the expression for the normal velocity component:
\begin{align*}
u_{\nu}^{(i)}(s,t) &:= u\left(s,z(s,t)+c_it,t\right)\cdot\begin{pmatrix}-\partial_sz(s,t)\\ 1\end{pmatrix}\\
&=\sum_{j=1}^N\frac{1}{2\pi N}\textrm{PV}\int_{\R}\frac{\partial_sz(s-\xi,t)-\partial_sz(s,t)}{\xi}\Phi_{ij}(\xi,s,t)\,d\xi
\end{align*}
for $i=-N\dots N$, $i\neq 0$, where 
\begin{align*}
\Phi_{i,j}(\xi,s,t)=&\frac{\xi^2}{\xi^2+(z(s-\xi,t)-z(s,t)-(c_i-c_j)t)^2}\\
&+\frac{\xi^2}{\xi^2+(z(s-\xi,t)-z(s,t)-(c_i+c_j)t)^2}\,
\end{align*}
and we have set $c_{-i}=-c_i$ for $i=1,\dots,N$.

\begin{theorem}\label{Subsn}
Suppose that $z(s,t)=\beta s+\bar{z}(s,t)$ with $\bar{z}\in C^1([0,T);C_*^{1,\alpha}(\R))$ satisfies
\begin{align}
\lim_{t\to 0}&\frac{1}{t}\left\|\partial_t z(\cdot,t) - \sum_{i=1}^N\frac{u_\nu^{(i)}(\cdot,t) + u_\nu^{(-i)}(\cdot,t)}{2N}\right\|_{L^1} = 0, \label{z1n} \\
\lim_{t\to 0}&\left\|\partial_t z(\cdot,t) - u_\nu^{(\pm i)}(\cdot,t)\right\|_{L^{\infty}} =  0\,\textrm{ for all $i=1\dots N$.}\label{z2n}
\end{align}
Moreover, let
\begin{align}\label{e:veln}
0 < c_i < \frac{2i-1}{N}\,\quad i=1,\dots,N.
\end{align}
Then there exists $T_*\in(0,T)$ such that there exists an admissible subsolution for  \eqref{IPM1}-\eqref{IPM4} on $[0,T_*]$ with initial datum $\rho_0$ given by \eqref{init} in the unstable case with $z_0=z\vert_{t=0}$. Furthermore, the density of the subsolution satisfies \eqref{e:densn}-\eqref{rhon}.
\end{theorem}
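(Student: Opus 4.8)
The strategy is to reproduce the proof of Theorem~\ref{Subs}, the only new feature being that the single potential equation is replaced by $2N$ Rankine--Hugoniot conditions, one across each interface $\Gamma^{\pm i}$. Once $z$ and the $c_i$ are fixed, the density \eqref{rhon} and, via \eqref{e:BS}, the velocity $u$ are determined ($u$ being bounded with continuous normal component $u_\nu$, by Section~\ref{s:u}), so it remains to build $m$ with $\partial_t\rho + \div m = 0$ in $\mathcal{D}'$ and with \eqref{str} valid, strictly in the interior of every strip $\Omega^i$, $|i|<N$. I would take
\[
m = \rho u - (1-\rho^2)\bigl(\gamma + \tfrac12 e_2\bigr),\qquad \gamma = \nabla^{\perp}g,
\]
with $g\in C^1(\overline{\Omega_{mix}})$ and $\gamma\equiv 0$ in $\Omega^{\pm N}$. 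Then \eqref{str} reduces to $|\nabla g|<\tfrac12$ in $\Omega_{mix}$; the PDE holds classically inside each $\Omega^i$ because $\rho$ and $1-\rho^2$ are constant there while $\div u = \div\nabla^{\perp}g = 0$; and, using $[\rho]_{\Gamma^{\pm i}}=\tfrac1N$, $[1-\rho^2]_{\Gamma^{\pm i}}=\mp\tfrac{2i-1}{N^2}$, the continuity of $u_\nu$, and the continuity of $\gamma_\nu=\partial_\tau g$ across the interfaces (which $\div\gamma=0$ in $\Omega_{mix}$ anyway demands), the jump condition across $\Gamma^{\pm i}$ collapses to
\[
\partial_\tau g = \gamma_\nu^{(\pm i)} := \frac{Nc_i}{2i-1}-\frac12 \;\pm\; \frac{N}{2i-1}\bigl(\partial_t z - u_\nu^{(\pm i)}\bigr)\qquad\text{on }\Gamma^{\pm i},
\]
i.e. the $N=1$ instance of \eqref{g2}.

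I would then construct $g$ in graph coordinates $\hat g(s,\lambda,t)=g(s,z(s,t)+\lambda,t)$, $\lambda\in[-c_Nt,c_Nt]$, as in the unstable case of Theorem~\ref{Subs}: put $\hat g(s,\pm c_i t,t)=\int_0^s\gamma_\nu^{(\pm i)}(s',t)\,ds'$ on each interface and interpolate linearly in $\lambda$ inside each strip. On a strip $\partial_s\hat g$ is then a convex combination of two consecutive $\gamma_\nu^{(\pm i)}$'s, which by \eqref{z2n} converges uniformly as $t\to0$ to the same convex combination of the constants $\tfrac{Nc_i}{2i-1}-\tfrac12$, each lying strictly inside $(-\tfrac12,\tfrac12)$ precisely by \eqref{e:veln}. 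For the transversal derivative $\partial_\lambda\hat g$, choosing the $c_i$ so that $\tfrac{Nc_i}{2i-1}$ is independent of $i$ (i.e. $c_i=\tfrac{2i-1}{2N}c$ with $c<2$, the step approximation of the linear profile of \cite{ccf:ipm}) makes the affine-in-$s$ leading parts cancel in every interface-to-interface difference, leaving an $s$-antiderivative of the small quantities $\partial_t z-u_\nu^{(\pm i)}$; these decay at infinity by the Schauder estimate \eqref{e:TPhi1} and are controlled in $L^1$ in $s$ by \eqref{z1n}, so dividing by the strip width $\sim t$ and using the $o(t)$-rate in \eqref{z1n} gives $\|\partial_\lambda\hat g(\cdot,t)\|_{L^\infty}\to0$. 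Since $|\nabla g|^2=(\partial_s\hat g-\partial_\lambda\hat g\,\partial_{x_1}z)^2+(\partial_\lambda\hat g)^2$, the two bounds together yield $|\nabla g|<\tfrac12$ on $\Omega_{mix}$ for $t<T_*$, producing the subsolution; the infinitely many admissible weak solutions then follow from Theorem~\ref{t:mixing}, the scheme being run in each open strip $\Omega^i$, $|i|<N$ (where $(\rho,u,m)$ is continuous and \eqref{str} strict), with the finitely many interfaces accounted for by the jump conditions.

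The routine inputs go as before: each $\Phi_{ij}\in\mathcal{W}^0$ — so that $u_\nu^{(\pm i)}$ is bounded, continuous and decays like $(1+|s|^{1+\alpha})^{-1}$, by Theorem~\ref{t:TPhi} — is checked as in Lemma~\ref{l:Tpm}, and $\hat g$ is $C^1$ up to each interface since $\partial_t z$ and $u_\nu^{(\pm i)}$ are continuous. The main obstacle is exactly the transversal-derivative estimate: in Theorem~\ref{Subs} a single linear interpolation across the whole mixing zone together with the single hypothesis \eqref{z2} sufficed, whereas here all $2N-1$ interior strips shrink at the same rate $t$, so the boundary traces of $g$ on adjacent interfaces must differ by $o(t)$ uniformly in $s$; this is what forces the step heights to be compatible (reducing \eqref{e:veln} to $c<2$) and makes the $L^1$-in-$s$, $o(t)$-in-$t$ control \eqref{z1n} precisely what is needed to close the argument.
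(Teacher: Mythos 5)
There is a genuine gap, and it originates in your very first structural choice: taking a \emph{single} potential $g\in C^1(\overline{\Omega_{mix}})$, equivalently demanding that $\gamma_\nu=\partial_\tau g$ be continuous across the interior interfaces. The equation $\partial_t\rho+\div m=0$ does not demand this: it requires $\div\gamma=0$ only inside each open strip, and across $\Gamma^{\pm i}$ it imposes the Rankine--Hugoniot relation for $m$, which couples the two one-sided traces with \emph{different} weights, namely $(1-(\tfrac{i}{N})^2)\gamma_\nu^{(i)}$ against $(1-(\tfrac{i-1}{N})^2)\gamma_\nu^{(i-1)}$ (this is \eqref{e:defh}). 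By forcing $\gamma_\nu^{(i)}=\gamma_\nu^{(i-1)}$ you over-determine the problem: you must now prescribe $\partial_\tau g$ on all $2N$ interfaces, and since all $2N-1$ interior strips shrink at rate $t$, each strip contributes a transversal derivative of the form $\partial_\lambda\hat g=\frac{1}{(c_{i+1}-c_i)t}\int_0^s(\gamma_\nu^{(i+1)}-\gamma_\nu^{(i)})\,ds'$. Even after your (additional, and itself restrictive) normalization $c_i=\tfrac{2i-1}{2N}c$, closing the estimate requires $\tfrac1t\|\tfrac{N}{2i+1}(\partial_tz-u_\nu^{(i+1)})-\tfrac{N}{2i-1}(\partial_tz-u_\nu^{(i)})\|_{L^1}\to0$ for every $i$, plus the analogous bounds on the negative side and in the middle strip --- that is, $o(t)$ control in $L^1$ of $2N-1$ independent linear combinations of the quantities $\partial_tz-u_\nu^{(\pm i)}$. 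The hypothesis \eqref{z1n} controls only \emph{one} such combination (the average over all $2N$ interfaces), and \eqref{z2n} gives only $o(1)$, not $o(t)$, for the individual terms. For $N=2$, for instance, nothing in \eqref{z1n}--\eqref{z2n} prevents $\partial_tz-u_\nu^{(1)}=-(\partial_tz-u_\nu^{(2)})=O(t^{1/2})$ with the negative-side terms vanishing, in which case your $\partial_\lambda\hat g$ in $\Omega^1$ blows up. So the construction does not close under the stated hypotheses; and as a secondary point, your restriction $c_i\propto(2i-1)$ would in any case only prove the theorem for a proper subfamily of the velocities allowed by \eqref{e:veln}.

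The paper's proof avoids exactly this trap. It uses a separate potential $g^{(i)}$ in each strip (so $\gamma_\nu$ is allowed to jump, with only the weighted combination appearing in \eqref{e:defh} constrained), takes $g^{(\pm i)}$ for $i\neq0$ to depend on $x_1$ alone --- so $\partial_{x_2}g^{(\pm i)}=0$ and no transversal estimate is needed in the outer strips at all, only the uniform bound on $\partial_{x_1}g^{(\pm i)}$, which is where the full condition \eqref{e:veln} enters via $\bigl|\sum_{j=i+1}^N(\tfrac{c_j}{N}-\tfrac{2j-1}{2N^2})\bigr|<\tfrac{N^2-i^2}{2N^2}$ --- and performs the linear interpolation in $\lambda$ only in the central strip $\Omega^0$. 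There, telescoping the recursions \eqref{e:defh} from $g^{(\pm N)}=0$ inward produces for $\partial_{x_2}g^{(0)}$ precisely $\frac{1}{c_1t}\int_0^s\bigl(\partial_tz-\sum_j\frac{u_\nu^{(j)}+u_\nu^{(-j)}}{2N}\bigr)ds'$, i.e.\ exactly the single averaged quantity that \eqref{z1n} was designed to control. If you want to salvage your approach, you must give up the global $C^1$ potential and adopt strip-wise potentials with discontinuous $\gamma_\nu$.
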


\begin{proof}

We proceed as in the proof of Theorem \ref{Subs} and set
$$
m=\rho u-(1-\rho^2)(\gamma+\tfrac12 e_2),
$$
with 
$$
\gamma=\nabla^\perp g^{(i)}\,\textrm{  in }\Omega^i,\quad i=-N\dots N,
$$
where $g^{(N)}=g^{{(-N)}}=0$ and $g^{(i)}\in C^1(\overline{\Omega^i})$ for $i=-(N-1)\dots (N-1)$ are to be determined. Then, \eqref{str} amounts to the conditions
$$
|\nabla g^{(i)}|<\frac12\quad\textrm{ in }\Omega^i\quad i=-(N-1)\dots(N-1),
$$
and \eqref{IPMsub} reduces to jump conditions \eqref{e:jump} on each interface: for any $i=1,\dots,N$
\begin{align}
\partial_\tau g^{(\pm(i-1))}&=\frac{1}{1-(\frac{i-1}{N})^2}\left\{\frac{c_i}{N}-\frac{2i-1}{2N^2}+\left(1-(\tfrac{i}{N})^2\right)\partial_{\tau}g^{(\pm i)}\pm \frac{\partial_tz-u_\nu^{(\pm i)}}{N}\right\}\notag\\
&=h^{(\pm i)}+\frac{1-(\tfrac{i}{N})^2}{1-(\frac{i-1}{N})^2}\partial_{\tau} g^{(\pm i)}\quad\textrm{ on }\Gamma^{\pm i}\,,\label{e:defh}
\end{align}
with
$$
h^{(\pm i)}(s,t)=\frac{1}{1-(\frac{i-1}{N})^2}\left\{\frac{c_i}{N}-\frac{2i-1}{2N^2}\pm \frac{\partial_tz-u_\nu^{(\pm i)}}{N}\right\}
$$
and 
$$
\partial_\tau g(x,t)=\partial_{x_1}g(x,t)+\partial_{x_2}g(x,t)\partial_{x_1}z(x_1,t).
$$
Since $g^{(\pm N)}=0$, we may use this expression to inductively define $g^{(\pm (N-1))}$, $g^{(\pm (N-2))},\dots,g^{(\pm 1)}$ as
\begin{align}
g^{(\pm i)}(x_1,t)&=\int_0^{x_1} h^{(\pm (i+1))}(s',t)+\frac{1-(\tfrac{i+1}{N})^2}{1-(\frac{i}{N})^2}\partial_{\tau} g^{(\pm (i+1))}(s',t)\,ds'.\label{e:defgi}
\end{align}
Note that by our choice $g^{(\pm i)}$ for $i\neq 0$ is a function of $x_1,t$ only, therefore 
$$
\partial_{\tau} g^{(\pm i)}=\partial_{x_1} g^{(\pm i)}=h^{(\pm (i+1))}+\frac{1-(\tfrac{i+1}{N})^2}{1-(\frac{i}{N})^2}\partial_{x_1} g^{(\pm (i+1))}
$$
for $i=1,\dots,(N-1)$. Now, by \eqref{z2n} we have $\|\partial_tz-u_\nu^{\pm i}\|_{L^\infty}=o(1)$ as $t\to 0$ for all $i=1,\dots,N$, so that
$$
h^{(\pm (i+1))}=\frac{1}{1-(\frac{i}{N})^2}\left\{\frac{c_{i+1}}{N}-\frac{2i+1}{2N^2}\right\}+o(1)\,.
$$
Using inductively \eqref{e:defh} we then obtain
\begin{align*}
\partial_{x_1} g^{(\pm i)}&=\frac{1}{1-(\tfrac{i}{N})^2}\sum_{j=i+1}^N\left(\frac{c_j}{N}-\frac{2j-1}{2N^2}\right)+o(1)\,.
\end{align*}
From \eqref{e:veln} we have $|\frac{c_j}{N}-\frac{2j-1}{2N^2}|<\frac{2j-1}{2N^2}$, so that
$$
\left| \sum_{j=i+1}^N\left(\frac{c_j}{N}-\frac{2j-1}{2N^2}\right)\right|\leq \sum_{j=i+1}^N\frac{2j-1}{2N^2}=\frac{N^2-i^2}{2N^2}\,.
$$
We deduce that for $i=1,\dots,N$ we have $\partial_{x_2}g^{(\pm i)}=0$ and
\begin{equation}\label{e:g1i}
\left\|\partial_{x_1}g^{(\pm i)}(\cdot,t)-\tfrac{1}{2}\right\|_{L^{\infty}}\to 0\,\textrm{ as }t\to 0.
\end{equation}
It remains to construct $g^{(0)}$ in $\Omega^0$. As in the proof of Theorem \ref{Subs}
we define for $s\in\R$, $t\in (0,T)$ and $\lambda\in [-c_1t,c_1t]$ the function
$$
\hat{g}(s,\lambda,t):=g^{(0)}(s,z(s,t)+\lambda,t)
$$
and set, in accordance with \eqref{e:defh} with $i=1$,
\begin{align*}
\hat{g}&(s,\lambda,t) =  \frac{c_1t+\lambda}{2c_1t} \left( \int_0^{s} (h^{(+1)}+(1-\tfrac{1}{N^2})\partial_{x_1}g^{(+1)}) ds' \right) \\
&+ \frac{c_1t-\lambda}{2c_1t} \left(\int_0^{s} (h^{(-1)}+(1-\tfrac{1}{N^2})\partial_{x_1}g^{(-1)})ds' \right)\\
=&  s\left(\frac{c_1}{N}-\frac{1}{2N^2}\right)\\
&+\frac{\lambda+c_1t}{2c_1tN} \left( \int_0^{s} \partial_tz-u_\nu^{(+1)} ds' \right)+\frac{\lambda-c_1t}{2c_1tN} \left( \int_0^{s} \partial_tz-u_\nu^{(-1)} ds' \right) \\
&+ \left(1-\frac{1}{N^2}\right)\left(\frac{c_1t+\lambda}{2c_1t} g^{(+1)}(s,t)+\frac{c_1t-\lambda}{2c_1t} g^{(-1)}(s,t) \right).
\end{align*}
Using \eqref{z2n}, \eqref{e:veln} and \eqref{e:g1i} we deduce that
\begin{equation}\label{e:g2i}
\left\|\partial_{x_1}g^{(0)}(\cdot,t)-\tfrac{1}{2}\right\|_{L^{\infty}}\to 0\,\textrm{ as }t\to 0.
\end{equation}
Furthermore, we have
\begin{align*}
\partial_{\lambda}\hat{g}=&\frac{1}{2c_1tN}\int_0^s2\partial_tz-u_\nu^{(+1)}-u_\nu^{(-1)}\,ds'+\frac{1}{2c_1t}\left(1-\frac{1}{N^2}\right)\left(g^{(+1)}-g^{(-1)}\right).
\end{align*}
Now, from \eqref{e:defh} and the choice $g^{(\pm N)}=0$ we obtain
$$
\partial_{x_1}g^{(1)}-\partial_{x_1}g^{(-1)}=\frac{N}{N^2-1}\sum_{j=2}^N2\partial_tz-u_{\nu}^{(+j)}-u_{\nu}^{(-j)},
$$
so that
\begin{align*}
\partial_{\lambda}\hat{g}=\frac{1}{c_1t}\int_0^s\left(\partial_tz-\sum_{j=1}^N\frac{u_\nu^{(+j)}+u_\nu^{(-j)}}{2N}\right)\,ds'.
\end{align*}
In particular, since $\partial_\lambda\hat{g}=\partial_{x_2}g^{(0)}$, it follows from \eqref{z1n} that
\begin{equation}\label{e:g3i}
\left\|\partial_{x_2}g^{(0)}(\cdot,t)\right\|_{L^{\infty}}\to 0\,\textrm{ as }t\to 0.
\end{equation}
From \eqref{e:g1i}, \eqref{e:g2i} and \eqref{e:g3i} Êwe finally deduce that
$$
|\nabla g^{(i)}|<\frac{1}{2}\textrm{ for all $i$ for sufficiently small }t>0.
$$
This concludes the proof.
\end{proof}

\bigskip

We now construct a curve $z=z(s,t)$ satisfying \eqref{z1n}-\eqref{z2n} analogously to the construction in Section \ref{s:z}. Indeed, assume that $z(s,t)=\beta s+\bar{z}(s,t)$ with $\bar{z}\in C^1([0,T];C^{1,\alpha}_*(\R))$ for some $0<\alpha<1$ and $\beta\in\R$.  
Recall that 
$$
u_{\nu}^{(\pm i)}=\frac{1}{N}\sum_{j=1}^NT_{\Phi_{\pm i,j}}z
$$
for $i=1,\dots,N$. Note that for any $i,j$
$$
\left.\Phi_{\pm i,j}\right|_{t=0}=\frac{2\xi^2}{\xi^2+(z_0(s-\xi)-z_0(s))^2}=\Phi_0,
$$
where $\Phi_0$ is defined in \eqref{e:Phi0}.
Lemma \ref{l:continuity} applies to show that
$$
\lim_{t\to 0}\sup_{s\in\R}(1+|s|^{1+\alpha})\left|T_{\Phi_{\pm i,j}}f(s)-T_{\Phi_0}f(s)\right|=0
$$
whenever $f\in C^{1,\alpha}_*(\R)$. We deduce
\begin{equation}\label{e:manycurves1}
\lim_{t\to 0}\sup_{s\in\R}(1+|s|^{1+\alpha})\left|u_{\nu}^{(\pm i)}(s)-T_{\Phi_0}\bar{z}_0(s)\right|=0,
\end{equation}
where $\bar{z}_0=\bar{z}|_{t=0}$. 
Next, we calculate $\left.\tfrac{\partial}{\partial t}\right|_{t=0}\tfrac{u_{\nu}^{(i)}+u_{\nu}^{(-i)}}{2}$. To this end set, analogously to Section \ref{s:z},
$$
\Delta\Phi_{i,j}=\frac{\Phi_{i,j}+\Phi_{-i,j}}{2}-\Phi_0=\Delta_{reg}\Phi_{ij}+\Delta_{sing}\Phi_{ij}\,
$$
where
\begin{equation*}
\begin{split}
\Delta_{reg}\Phi_{ij}=\frac{Z_0^2-Z_t^2}{2(1+Z_0^2)}&\left(\frac{1}{1+(Z_t+(c_i-c_j)\tfrac{t}{\xi})^2}+\frac{1}{1+(Z_t-(c_i-c_j)\tfrac{t}{\xi})^2}\right.\\
&+\left.\frac{1}{1+(Z_t+(c_i+c_j)\tfrac{t}{\xi})^2}+\frac{1}{1+(Z_t-(c_i+c_j)\tfrac{t}{\xi})^2}\right)
\end{split}
\end{equation*}
and
\begin{equation*}
\begin{split}
\Delta_{sing}\Phi_{ij}=&\frac{1}{1+Z_0^2}\left(\frac{-(c_i-c_j)tZ_t\xi-\tfrac{(c_i-c_j)^2}{2}t^2}{\xi^2+(Z_t\xi+(c_i-c_j)t)^2}+\frac{(c_i-c_j)tZ_t\xi-\tfrac{(c_i-c_j)^2}{2}t^2}{\xi^2+(Z_t\xi-(c_i-c_j)t)^2}\right.\\
&+\left.\frac{-(c_i+c_j)tZ_t\xi-\tfrac{(c_i+c_j)^2}{2}t^2}{\xi^2+(Z_t\xi+(c_i+c_j)t)^2}+\frac{(c_i+c_j)tZ_t\xi-\tfrac{(c_i+c_j)^2}{2}t^2}{\xi^2+(Z_t\xi-(c_i+c_j)t)^2}\right).
\end{split}
\end{equation*}
As in Lemma \ref{l:regular}, we have 
$$
\lim_{t\to 0}\sup_{s\in\R}(1+|s|^{1+\alpha})\left|\tfrac{1}{t}T_{\Delta_{reg}\Phi_{ij}}f(s)-T_{\Phi_1}f(s)\right|=0
$$
whenever $f\in C^{1,\alpha}_*(\R)$ and, using Lemma \ref{l:singular},  
$$
\lim_{t\to 0}\sup_{s\in\R}(1+|s|^{1+\alpha})\left|\tfrac{1}{t}T_{\Delta_{sing}\Phi_{ij}}f(s)-c_{ij}\partial_s^2f(s)\sigma(s)\right|=0
$$
whenever $f\in C^{2,\alpha}_*(\R)$, where $c_{ij}=\tfrac{1}{2}|c_i-c_j|+\tfrac{1}{2}|c_i+c_j|=\max(c_i,c_j)$.
Consequently, 
\begin{equation}\label{e:manycurves2}
\left.\frac{\partial}{\partial t}\right|_{t=0}\frac{1}{N}\sum_{i=1}^N\tfrac{u_{\nu}^{(i)}+u_{\nu}^{(-i)}}{2}=T_{\Phi_1}\bar{z}_0+T_{\Phi_0}z_0'+\bar{c}\partial_s^2z_0\sigma,
\end{equation}
where
$$
\bar{c}=\frac{1}{N^2}\sum_{i,j=1}^N\max(c_i,c_j),
$$
and $z_0'$ and $\Phi_1$ are defined in \eqref{e:Phi1} in Section \ref{s:z}. From these considerations we deduce

\begin{theorem}\label{t:zN}
Assume that $z_0(s)=\beta s+\bar{z}_0(s)$ with $\bar{z}_0\in C^{3,\alpha}_*(\R)$ for some $0<\alpha<1$ and $\beta\in\R$. 	
Let
\begin{equation*}
	z(s,t)=z_0(s)+tz_1(s)+\tfrac{1}{2}t^2z_2(s),
\end{equation*}
where
\begin{align*}
	z_1&:=T_{\Phi_0}\bar{z}_0,\\
	z_2&:=T_{\Phi_0}z_1+T_{\Phi_1}\bar{z}_0+\bar{c}\sigma\partial_s^2z_0,
\end{align*}
with $\sigma=\frac{1-(\partial_sz_0)^2}{(1+(\partial_sz_0)^2)^2}$ and $\bar{c}=\frac{1}{N^2}\sum_{i,j=1}^N\max(c_i,c_j)$.
Then $z(s,t)=\beta s+\bar{z}(s,t)$ with $\bar{z}\in C^2([0,\infty);C_*^{1,\alpha}(\R))$ and with this choice of $z$ the conditions \eqref{z1n}-\eqref{z2n} of Theorem \ref{Subsn} are satisfied. 
\end{theorem}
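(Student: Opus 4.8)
The plan is to follow the proof of Theorem \ref{t:z} essentially verbatim: the only change is that the second-order coefficient $z_2$ now carries the averaged speed $\bar c=\frac{1}{N^2}\sum_{i,j=1}^N\max(c_i,c_j)$ in place of $c$, while the weight functions $\Phi_0,\Phi_1$ and the multiplier $\sigma$ are unchanged. Since $z=z_0+tz_1+\tfrac12 t^2 z_2$ is a fixed quadratic polynomial in $t$, the regularity claim $\bar z\in C^2([0,\infty);C^{1,\alpha}_*(\R))$ reduces to $z_1\in C^{2,\alpha}_*(\R)$ and $z_2\in C^{1,\alpha}_*(\R)$. First I would invoke Step~1 of the proof of Theorem \ref{t:z} without change: $\Phi_0\in\mathcal{W}^{2,\alpha}$ and $\Phi_1\in\mathcal{W}^{1,\alpha}$, so Theorem \ref{t:TPhi} gives $z_1=T_{\Phi_0}\bar z_0\in C^{2,\alpha}_*$, and then $z_2=T_{\Phi_0}z_1+T_{\Phi_1}\bar z_0+\bar c\,\sigma\partial_s^2z_0\in C^{1,\alpha}_*$, using $\sigma\in C^2$ with bounded derivatives and $\partial_s^2 z_0\in C^{1,\alpha}_*$; nothing here depends on $N$ or on the constant in front of $\sigma\partial_s^2z_0$.

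For \eqref{z2n}, using $u_\nu^{(\pm i)}=\frac{1}{N}\sum_{j=1}^N T_{\Phi_{\pm i,j}}z$ and expanding $z$, one has
$$u_\nu^{(\pm i)}-\partial_t z=\frac1N\sum_{j=1}^N\bigl(T_{\Phi_{\pm i,j}}\bar z_0-T_{\Phi_0}\bar z_0\bigr)+\frac tN\sum_{j=1}^N T_{\Phi_{\pm i,j}}z_1-tz_2+\frac{t^2}{2N}\sum_{j=1}^N T_{\Phi_{\pm i,j}}z_2,$$
where we used $z_1=T_{\Phi_0}\bar z_0$ to combine the leading terms. Each $\Phi_{\pm i,j}$ is a sum of two terms of the form $\tfrac{1}{1+(Z_t+at/\xi)^2}$ with $a$ a fixed multiple of the $c_i$'s, so it has exactly the structure of $\Phi_\pm$ in Section~\ref{s:u}; the proofs of Lemma \ref{l:Tpm} and Lemma \ref{l:continuity} therefore apply verbatim, giving $\sup_{[0,T]}\vvvert\Phi_{\pm i,j}\vvvert_0<\infty$ and $(1+|s|^{1+\alpha})|T_{\Phi_{\pm i,j}}f-T_{\Phi_0}f|\to 0$ as $t\to0$ for $f\in C^{1,\alpha}_*$. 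Applying these to the three groups above with $f=\bar z_0,z_1,z_2$ (all in $C^{1,\alpha}_*$ by Step~1) gives $\sup_s(1+|s|^{1+\alpha})|u_\nu^{(\pm i)}(s,t)-\partial_t z(s,t)|\to 0$, which is stronger than \eqref{z2n}.

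For \eqref{z1n} I would prove the stronger weighted sup-norm estimate for $\frac1t\bigl(\frac1N\sum_{i=1}^N\frac{u_\nu^{(i)}+u_\nu^{(-i)}}{2}-\partial_t z\bigr)$ (the $L^1$ bound then follows by integrating the weight $(1+|s|^{1+\alpha})^{-1}$). Writing $\frac{\Phi_{i,j}+\Phi_{-i,j}}{2}=\Phi_0+\Delta_{reg}\Phi_{ij}+\Delta_{sing}\Phi_{ij}$ as in the text and using the choices of $z_1$ and $z_2$, the difference decomposes into
$$\frac1{N^2}\sum_{i,j}\Bigl(\tfrac1t T_{\Delta_{reg}\Phi_{ij}}\bar z_0-T_{\Phi_1}\bar z_0\Bigr)+\frac1{N^2}\sum_{i,j}\Bigl(\tfrac1t T_{\Delta_{sing}\Phi_{ij}}\bar z_0-c_{ij}\,\partial_s^2\bar z_0\,\sigma\Bigr)+\frac1{N^2}\sum_{i,j}\Bigl(T_{\tfrac{\Phi_{ij}+\Phi_{-i,j}}{2}}z_1-T_{\Phi_0}z_1\Bigr)+\frac t{2N^2}\sum_{i,j}T_{\tfrac{\Phi_{ij}+\Phi_{-i,j}}{2}}z_2,$$
with $c_{ij}=\tfrac12|c_i-c_j|+\tfrac12|c_i+c_j|=\max(c_i,c_j)$, so that $\frac1{N^2}\sum_{i,j}c_{ij}=\bar c$ and the singular terms combine to exactly $\bar c\,\sigma\partial_s^2 z_0$. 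Lemma \ref{l:regular} kills the first group, Lemma \ref{l:singular} (applicable since $\bar z_0\in C^{2,\alpha}_*$) the second, Lemma \ref{l:continuity} the third, and the explicit factor $t$ together with the uniform bound from Lemma \ref{l:Tpm} the fourth; all in the weighted sup norm. This yields \eqref{z1n} and concludes the proof. I do not expect a substantial obstacle here — the argument is a transcription of Section~\ref{s:z} — the only thing worth double-checking is that the elementary integral computation in Lemma \ref{l:singular} still produces the coefficient $\max(c_i,c_j)$ for each shifted pair in $\Delta_{sing}\Phi_{ij}$, which is already recorded in the discussion preceding the theorem.
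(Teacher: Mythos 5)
Your proposal is correct and is exactly the argument the paper intends: the paper's proof of Theorem \ref{t:zN} consists of the calculations preceding the statement (the decomposition $\tfrac{\Phi_{i,j}+\Phi_{-i,j}}{2}-\Phi_0=\Delta_{reg}\Phi_{ij}+\Delta_{sing}\Phi_{ij}$, the identity $c_{ij}=\max(c_i,c_j)$, and the resulting formula for $\left.\partial_t\right|_{t=0}\tfrac{1}{N}\sum_i\tfrac{u_\nu^{(i)}+u_\nu^{(-i)}}{2}$) followed by the remark that one repeats Steps 1--3 of the proof of Theorem \ref{t:z} using Lemmas \ref{l:Tpm}, \ref{l:continuity}, \ref{l:regular} and \ref{l:singular}, which is precisely what you have written out. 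No gaps.
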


The proof is entirely analogous to the proof of Theorem \ref{t:z}, based on the above calculations and Lemmas \ref{l:continuity}, \ref{l:regular} and \ref{l:singular}.

Finally, observe that for the subsolution obtained in Theorem \ref{Subsn} the rate of expansion of the mixing zone is given by $c_n<\frac{2n-1}{n}=2-\frac{1}{n}$, so that any expansion rate $c<2$ is obtainable by choosing $n$ sufficiently large. 

\bibliographystyle{acm}


\end{document}